\providecommand{\U}[1]{\protect\rule{.1in}{.1in}}
\providecommand{\U}[1]{\protect\rule{.1in}{.1in}}
\newtheorem{theorem}{Theorem}
\newtheorem{lemma}{Lemma}
\newenvironment{proof}{\paragraph{Proof}}{\hfill$\square$}
\newtheorem{corollary}{Corollary}
\newtheorem{example}{Example}
\newtheorem{remark}{Remark}
\numberwithin{equation}{section}
\numberwithin{table}{section}
\numberwithin{figure}{section}
\begin{document}
\begin{flushleft}
{\Large
\textbf{Probabilistic solutions of fractional differential and partial differential equations and their Monte Carlo simulations}
} \newline
\\
Tamer Oraby, Harrinson Arrubla, Erwin Suazo$^{*}$
\\
School of Mathematical and Statistical Sciences, The University of Texas Rio Grande Valley, \\
1201 W. University Dr., Edinburg, Texas, USA,
\\
 $^*$Corresponding author: erwin.suazo@utrgv.edu  
\end{flushleft}

\section*{Abstract}
\indent 
The work in this paper is four-fold. Firstly, we introduce an alternative approach to solve fractional ordinary differential equations as an expected value of a random time process. Using the latter, we present an interesting numerical approach based on Monte Carlo integration to simulate solutions of fractional ordinary and partial differential equations. Thirdly, we show that this approach allows us to find the fundamental solutions for fractional partial differential equations (PDEs), in which the fractional derivative in time is in the Caputo sense and the fractional in space one is in the Riesz-Feller sense. Lastly, using Riccati equation, we study families of fractional PDEs with variable coefficients which allow explicit solutions. Those solutions connect Lie symmetries to fractional PDEs. 


{\bf Keywords:} Caputo fractional derivative, Riesz-Feller fractional derivative, Riccati equation, Lie symmetries, Green functions, Monte Carlo Integration, Mittag-Leffler function.

\section{Introduction}
\label{Sec:Intro}
Although it was started in the second half of the eighteenth century by Leibniz, Newton and l'H\^{o}pital, \cite{Consiglio2019}, fractional calculus has received great attention in the last two decades. Many physical, biological and epidemiological models have found that fractional order models could perform at least as good as well as their integer counterparts, \cite{Islam2020, Almeida}. Integer order models are also appearing as special cases of the fractional order. That makes the dynamical behavior of those models richer and in some cases flexible. As advances are made in fractional calculus and fractional modeling, understanding of the physical interpretation of fractional derivatives is becoming clearer. A memory kernel with algebraic decay is the most common interpretation for the change in the dynamical behavior of the system \cite{Stynes2018,Diethelm2020,Dokuyucu2020}. Today fractional calculus is widely used in physical modeling. Examples include the nonexponential relaxation in dielectrics and ferromagnets \cite{Nigmatullin}, \cite{WeronKlauser}, the diffusion processes \cite{MetzlerBarkaiKlafter}, \cite{MetzlerKlafter} and the Hamiltonian Chaos \cite{Zaslavsky} and \cite{SaichevZaslavsky}.

Anomalous diffusion could be modeled using fractional order stochastic processes and their Fokker-Planck equations \cite{Klafter1987,Sokolov2005,Yanovsky2000}. Continuous-time random walk (CTRW) is one approach used to model anomalous diffusion \cite{Metzler2000a}. In particular, a CTRW with infinite-mean time to jump exhibits sub-diffusion behavior. The time to jump could be modeled by a heavy tail distribution with index $\beta$ such that $0<\beta<1$ leading to a mean square displacement that is of order $t^{\beta}$ depicting the short-range jump and so thus sub-diffusion. A long-range jump leads to super-diffusion, i.e., $\beta>1$, \cite{Metzler2014}.

A subordinator is a non-decreasing L\'evy process, i.e. a non-negative process with independent and stationary increments. It is used as a random time, or operational time, in defining time-changed-processes. Its density decays algebraically as $1/t^{\alpha+1}$ as $t\rightarrow\infty$, with $\alpha \in (0,1)$. A different type of time-change can be made by using the inverse subordinator (i.e the hitting time of a subordinator) which sometimes leads to sud-diffusion processes \cite{Orsingher2010,Orsingher2016}.


The Riccati equation has played an important role in finding explicit solutions for Fisher and Burgers equations, (see \cite{Eule2006} and \cite{Feng} and references therein). Also, similarity transformations and the solutions of Riccati and Ermakov systems have been extensively applied thanks to Lie groups and Lie algebras \cite{Bluman}, \cite{BlumanKumei}, \cite{BlumanAnco}, \cite{BlumanCole} and \cite{Olver}. In this work, we show that this approach allows us to find the fundamental solutions for fractional PDEs; the fractional derivative in time is in the Caputo sense and the fractional derivative in space one is in the Riesz-Feller sense. We establish a relationship between the coefficients through the Riccati equation; we study families of fractional PDEs with variable coefficients which allow explicit solutions and find those explicit solutions. Those solutions connect Lie symmetries to fractional PDEs. 

 Formulating solutions of fractional differential equations and partial differential equations as expected values with respect to heavy-tail or power law distributions could be enabled using Monte-Carlo integration methods and Sampling Importance Integration to evaluate them. The mean problem is in the number of simulations or random number generations that need to be done to guarantee convergence and small standard error.

In Section 1, we will review fundamental definitions and classical results needed from the classical theory of fractional differential equations. In Section 2, we present the first main result of this work, Lemma \ref{lem3}, which allow us to see a fractional functions and their fractional derivatives as Wright type transformations of some functions and their derivatives. They could be also interpreted as expected values of functions in a random time process. Also, in Section 2 we present Theorem 2.1 which allow us to solve fractional ordinary differential equations through solutions of regular ordinary differential equations. In Section 3, we derive fractional green functions for some important fractional partial differential equations, like diffusion, Schr\"{o}dinger, and wave equation. In Section 4, we derive green functions of fractional partial differential equations with variable coefficients with application to Fokker-Planck equations. In Section 5, we use the integral transform or the expected value interpretation of the solutions of fractional equations to carry out Monte Carlo simulations of their solution. 

\subsection{Preliminaries}
\label{Sec:Prelim}
In this section we give the required background of Caputo and Riesz-Feller fractional differentiation.
\subsubsection*{Caputo Derivative.}

Let $D^n$ be the Leibniz integer-order differential operator given by 
\begin{equation*}
D^{n}f = \dfrac{d^{n}f}{dt^{n}} = f^{(n)},
\end{equation*}
and let $J^n$ be an integration operator of integer order given by 
\begin{equation}  
J^{n}f(t) = \dfrac{1}{n-1!} \int_{0}^{t}(t - \tau )^{n-1} f(\tau)d\tau,
\label{eq:intop}
\end{equation}
where $n\in \mathbb{Z}^{+}$. Let us use $D=D^{1}$ for the first derivative. For fraction-order integrals, we use
\begin{equation}
J^{n-\beta} f(t) = \dfrac{1}{\Gamma(n-\beta)} \int_{0}^{t}(t - \tau)^{n-\beta-1} f(\tau) d\tau,
\end{equation}
where $n-1 < \beta \leq n$. Now, define the Caputo fractional differential operator $D_C^\beta$ to be 
\begin{equation*}
D_{C}^{\beta}f(t) = J^{n-\beta}D^nf(t),
\end{equation*}
where $n-1<\beta \leq n$, for $n \in \mathbb{N}$. 
\subsubsection*{Riemann-Liouville Derivative.}
The Riemann-Liouville fractional differential operator $D_{RL}^{\beta}$ is defined to be
\begin{equation*}
D_{RL}^{\beta}f(t) = D^{n}J^{n-\beta}f(t),
\end{equation*}
where $n-1<\beta\leq n$, for $n \in\mathbb{N}$. We will use $
\partial^{\beta}_{t} F := \dfrac{\partial^{\beta}F}{\partial t^{\beta}}$ and use $\partial_{t}F := \dfrac{\partial F}{\partial t}$.

The Riemann-Liouville fractional is related to the Caputo fractional derivative
through \cite{Podlubny99}:
\begin{equation*}
D_{RL}^{\beta} f(t) = D_{C}^{\beta} f(t) + \sum_{k=0}^{n-1}f^{(k)}(0) \dfrac{t^{k}}{k!}.
\end{equation*}
While we will not discuss the Riemann-Liouville fractional derivatives in
the paper, the results presented in this paper are valid for Riemann-Liouville fractional derivatives, when $f^{(k)}(0)=0$ for $k = 0, 1, \ldots, n-1$.

We will consider $n=1$ in this work; that is $0<\beta\leq 1$. Some of the
results be extended through the remark that for $0<\beta\leq 1$, $%
D_C^{n+\beta-1} f(t)=D_C^{\beta} f^{(n-1)}(t)$ for $n\geq 1$. Note also that when $0<\beta\leq 1$, $D_{RL}^{\beta} f(t)=D_{C}^{\beta} f(t) + f(0)$.
\subsubsection*{Riesz-Feller Derivative}

The Riesz-Feller fractional differential operator $%
D_{RF}^{\alpha,\theta}$ is defined to be \cite{MainYuriPagni2001} 
\begin{eqnarray*}
D_{RF}^{\alpha ,\theta }f(x) &=&\frac{\Gamma (1+\alpha )}{\pi }\sin ((\alpha
+\theta )\frac{\pi }{2})\int_{0}^{\infty }\dfrac{f(x+y)-f(x)}{y^{1+\alpha }}%
dy \\
&&+\frac{\Gamma (1+\alpha )}{\pi }\sin ((\alpha -\theta )\frac{\pi }{2}%
)\int_{0}^{\infty }\dfrac{f(x-y)-f(x)}{y^{1+\alpha }}dy
\end{eqnarray*}%
for fractional order $0<\alpha \leq 2$, and the skewness parameter $\theta\leq
\min(\alpha,2-\alpha)$. The symmetric Riesz-Feller differential operator is
defined at $\theta=0$ and is simply denoted by $D_{RF}^{\alpha} $.
\subsubsection*{Transformations.}
The Laplace transform of a function $f(t)$ is defined as 
\begin{equation*}
\mathcal{L}(f)(s) = \widetilde{f}(s) = \int_{0}^{\infty} e^{-st}f(t)dt.
\end{equation*}
The inverse Laplace transform is defined by 
\begin{equation*}
\mathcal{L}^{-1}\left(\widetilde{f}\right)(t) = \dfrac{1}{2\pi i}\int_{\mathcal{C}} e^{st} \widetilde{f}(s)ds
\end{equation*}
where $\mathcal{C}$ is a contour parallel to the imaginary axis and to the right of the singularities of $\widetilde{f}$. The Laplace transform of the Caputo fractional derivative of a function is given by 
\begin{equation}  
\label{eq:LC}
\mathcal{L}\left(D_{C}^{\beta}f\right)(s) = s^{\beta} \widetilde{f}(s) - \sum_{k=0}^{n-1}
s^{\beta-1-k}f^{(k)}(0).
\end{equation}
The Fourier transform of a function $f(x)$ is defined as
\begin{equation*}
\mathcal{F}(f)(y) = \widehat{f}(y) = \int_{-\infty}^{\infty} e^{ixy}f(x)dx.
\end{equation*}
The inverse Fourier transform is defined by
\begin{equation*}
\mathcal{F}^{-1}\left(\widehat{f} \right)(x)=\dfrac{1}{2 \pi}\int_{-\infty}^\infty e^{-ixy}\widehat{f}(y)dy.
\end{equation*}
The Fourier transform of the Riesz-Feller fractional derivative of a function is given by
\begin{equation}  \label{eq:FRF}
\mathcal{F}\left(D_{RF}^{\alpha,\theta} f\right)(y) =  - \psi_{\alpha}^{\theta}(y)\widehat{f}(y),
\end{equation}
where $\psi_{\alpha}^{\theta}(y) = |y|^{\alpha} e^{\frac{i \text{sign}(y)\theta\pi}{2}}$.
\subsubsection*{Mittag-Leffler Function}
The Mittag-Leffler function, which generalizes the exponential function, can be written as follows, 
\begin{equation}
\begin{split}
    E_{\beta} (z) = \sum\limits_{k=0}^{\infty} \dfrac{z^k}{\Gamma(\beta k+1)},
\end{split} \qquad
\begin{split}
    \beta \in \mathbb{R}^{+}, \enspace z \in \mathbb{C},
\end{split}
\end{equation}
and the more general Mittag-Leffler function with two-parameters is defined to be 
\begin{equation}
\begin{split}
    E_{\beta,\alpha} (z) = \sum\limits_{k=0}^{\infty} \dfrac{z^k}{\Gamma(\beta
k+\alpha)},
\end{split}\qquad
\begin{split}
    \beta,\alpha \in \mathbb{R}^+, \enspace z \in \mathbb{C}.
\end{split}
\end{equation}

\subsubsection*{Wright function.}
The Wright function is another special function of importance to fractional calculus and is defined by \cite{MainGloVivo2001},
\begin{equation}  
\label{eq:wf}
\begin{split}
    W_{\beta,\alpha} (z) = \sum\limits_{k=0}^{\infty} \dfrac{z^k}{k!\Gamma(\beta k+\alpha)},
\end{split}\qquad
\begin{split}
    \beta>-1,\alpha \in \mathbb{C}, \enspace %
z \in \mathbb{C}.
\end{split}
\end{equation}
The following Wright type function will be fundamental in the rest of this work
\begin{equation}
    g_{\beta}(x;t) = \dfrac{1}{t^{\beta}}W_{-\beta,1-\beta}\left(-\dfrac{x}{t^{\beta}}\right),
\end{equation}
 which is a probability density function of the random time process $\mathcal{T}_{\beta}(t)$ for all $t>0$ ~\cite{MainConsi2020,Gorenflo2010}. $\mathcal{T}_{\beta}(t)$ can be seen as the inverse of a $\beta$ stable subordinator with density $g_{\beta}(x,t)$, see \cite{Bingham1971, Meerschaert2012, Meerschaert2013}.
 It has a Laplace transform 
\begin{equation}  
\label{eq:lw}
\mathcal{L}\left(g_{\beta}(\cdot;t)\right)(s) = \int_{0}^{\infty}e^{-sx} g_{\beta}(x;t)dx = E_{\beta}\left(-st^{\beta}\right)
\end{equation}
for $\Re(s)>0$ and moments $\mathbb{E}\left[(\mathcal{T}_{\beta}(t))^{k}\right] =  \Gamma(k+1) \dfrac{t^{k\beta}}{\Gamma(k\beta+1)}$ for $k\geq1$~\cite{Piryatinska2005,Kumar2018}. At the same time     
\begin{equation}  
\label{eq:lw2}
\int_{0}^{\infty} e^{-st} g_{\beta}(x;t)dt = s^{\beta-1} e^{-xs^\beta}.
\end{equation}
It is shown in~\cite{MainConsi2020} that for $0<\beta,\alpha<1$, 
\begin{equation}  
\label{eq:CK}
g_{\beta \alpha}(x;t) = \int_{0}^{\infty} g_{\beta}(x;s) g_{\alpha}(s;t)ds
\end{equation}
for $x>0$. For more details about Wright function see~\cite{MainConsi2020,Gorenflo2010}. From \eqref{eq:lw} and \eqref{eq:CK}, we get
\begin{equation*}
\int_{0}^{\infty}E_{\beta}\left(-st^{\beta}\right) g_{\alpha}(t;r)dt = \int_{0}^{\infty}\int_{0}^{\infty}e^{-sx} g_{\beta}(x;t) g_{\alpha}(t;r)dxdt = E_{\beta \alpha}\left(-sr^{\beta \alpha}\right),
\end{equation*}
and from \eqref{eq:lw2} and \eqref{eq:CK}, we get 
\begin{equation*}
\int_{0}^{\infty} E_{\alpha}\left(-tx^{\alpha}\right) g_{\beta}(x;t)dt = \int_{0}^{\infty}s^{\beta-1} e^{-xs^\beta}g_{\alpha}(s;x)ds.
\end{equation*}
\subsubsection*{L\'evy $\alpha$-stable distribution.}
The L\'evy $\alpha$-stable distribution with stability index $0< \alpha \leq 2$, $L_{\alpha}^{\theta}(x)$ has a Fourier transform given by 
\begin{equation}\label{levy-alpha}
\mathcal{F}\left(L_{\alpha}^{\theta}(\cdot)\right)(y) := \widehat{L}_{\alpha}^{\theta}(y) = e^{-\psi_{\alpha}^{\theta}(y)}.
\end{equation}
The density $L_{\alpha}^{\theta}(x)$ has a fat tail proportional to $\lvert x\rvert^{-(1+\alpha)}$.

Define $L_{\alpha}^{\theta}(y;x) = \dfrac{1}{x^{\frac{1}{\alpha}}} L_{\alpha}^{\theta}\left(\dfrac{y}{x^{\frac{1}{\alpha}}}\right)$ for $y\in \mathbb{R}$ and $x>0$ which is a probability density function of the $\alpha$-stable random process with asymmetry parameter $\theta$, denoted by $\mathcal{L}_{\alpha}^{\theta}(x)$
for $x>0$, \cite{MainardiPagni2008}.
\begin{equation}  
\label{eq:fw1}
\int_{-\infty}^{\infty} e^{isy} L_{\alpha}^{\theta}(y;x)dy = e^{-\psi_{\alpha}^{\theta}(s) x}.
\end{equation}
For $0<\alpha<1$, and $t,x>0$, 
\begin{equation}
L_{\alpha}^{-\alpha}(t;x) = \dfrac{x \alpha}{t} g_{\alpha}(x;t).
\end{equation}
See \cite{MainMurPag2009} for more details. Also, for $0<\alpha \leq 1$ 
\begin{equation}  
\label{eq:CL}
L_{\beta \alpha}^{\theta \alpha}(x;t) = \int_{0}^{\infty}
L_{\beta}^{\theta}(x;s) L_{\alpha}^{-\alpha}(s;t) ds.
\end{equation}
\section{Fractional Derivative as Expected Value with Respect to a L\'evy Distribution}
\label{Sec:MandM}
In this Section we establish the first main result of this paper.
\subsection{Caputo Fractional Derivative}
The following lemma is one of the main results of this work for its wide applicability. 
\begin{lemma}\label{lem3} 
\leavevmode
\begin{enumerate}
\item Let $f\in C([0,\infty ))$ and $f_{\beta }$ be a function.
Hence, 
\begin{equation}\label{transformation}
\widetilde{f}_{\beta }(s)=s^{\beta -1}\widetilde{f}\left(s^{\beta}\right)
\end{equation}
for $s\in [0,\infty)$ if and only if
\begin{equation*}
f_{\beta }(t)=\int_{0}^{\infty }f(x)g_{\beta }(x;t)dx
\end{equation*}
for $0<\beta \leq 1$, and if integrals exist.

\item Let $f\in C^{1}([0,\infty ))$ and $f_{\beta }$ be a function such that 
\begin{equation*}
\widetilde{f}_{\beta }(s) = s^{\beta -1}\widetilde{f}\left(s^{\beta}\right)
\end{equation*}
for $s\in [0,\infty)$ and $f_{\beta }(0)=f(0)$ then
\begin{equation*}
D_{C}^{\beta }f_{\beta }(t)=\int_{0}^{\infty }f^{(1)}(x)g_{\beta }(x;t)dx
\label{eq:Clm}
\end{equation*}
for $0<\beta \leq 1$.

\item Let $f\in C^{n}([0,\infty))$  such that 
\begin{equation}\label{transformation21}
\mathcal{L}\left(f_{\beta }^{(n-1)}(\cdot)\right)(s) = s^{\beta -1} \mathcal{L}\left(f^{(n-1)}(\cdot)\right)\left(s^{\beta }\right)
\end{equation}
for $s\in [0,\infty)$ and $f_{\beta }^{(n-1)}(0)=f^{(n-1)}(0)$. For $0<\beta \leq 1$, the following holds
\begin{equation*}
D_{C}^{n+\beta -1}f_\beta(t)=\int_{0}^{\infty }f^{(n)}(x)g_{\beta }(x;t)dx
\label{NDerivative}
\end{equation*}
for $n\geq1$.

\item For $0<\alpha,\beta < 1$ and such that $f_{\beta \alpha}(0) = f_{\beta}(0)$
\begin{equation*}
D_{C}^{\beta \alpha }f_{\beta \alpha}(t) = D_{C}^{\alpha }\left(D_{C}^{\beta}f_{\beta}(s)\right).
\end{equation*}

\end{enumerate}
\end{lemma}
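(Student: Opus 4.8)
The plan is to reduce everything to two tools already in hand: the subordination identity \eqref{eq:CK} and the differentiation rule of part (2) of this lemma. First I would dispatch the derivative on the left by applying part (2) with order $\beta\alpha$ (legitimate since $0<\beta\alpha<1$), giving
\[
D_C^{\beta\alpha}f_{\beta\alpha}(t)=\int_0^\infty f^{(1)}(x)\,g_{\beta\alpha}(x;t)\,dx .
\]
This uses $\widetilde f_{\beta\alpha}(s)=s^{\beta\alpha-1}\widetilde f(s^{\beta\alpha})$ together with $f_{\beta\alpha}(0)=f(0)$; the latter I would read off from the standing hypothesis $f_{\beta\alpha}(0)=f_\beta(0)$ combined with $f_\beta(0)=f(0)$ inherited from the setup of parts (1)--(2).

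Next I would insert the Chapman--Kolmogorov relation \eqref{eq:CK}, namely $g_{\beta\alpha}(x;t)=\int_0^\infty g_\beta(x;s)\,g_\alpha(s;t)\,ds$, and interchange the order of integration to obtain
\[
D_C^{\beta\alpha}f_{\beta\alpha}(t)=\int_0^\infty\left(\int_0^\infty f^{(1)}(x)\,g_\beta(x;s)\,dx\right)g_\alpha(s;t)\,ds .
\]
By part (2) once more, now with order $\beta$, the inner integral is precisely $D_C^\beta f_\beta(s)$, so the right-hand side collapses to the $\alpha$-subordination $\int_0^\infty \big(D_C^\beta f_\beta\big)(s)\,g_\alpha(s;t)\,ds$ of the function $D_C^\beta f_\beta$.

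The last step, and the one I expect to be the main obstacle, is to identify this $\alpha$-subordination with $D_C^\alpha\big(D_C^\beta f_\beta\big)(t)$. My route would be to apply part (2) a third time, now with base function $f_\beta$ and order $\alpha$: since $f_{\beta\alpha}(t)=\int_0^\infty f_\beta(s)\,g_\alpha(s;t)\,ds$ by \eqref{eq:CK}, and the initial-value hypothesis supplies exactly $f_{\beta\alpha}(0)=f_\beta(0)$, the hypotheses of part (2) are met and its conclusion ties the order-$\alpha$ Caputo derivative back to an integral against $g_\alpha(\cdot;t)$. The delicate bookkeeping is to confirm that the function being subordinated ($D_C^\beta f_\beta$) and its initial datum are the ones part (2) demands, so that the $g_\alpha$-integral genuinely realizes $D_C^\alpha(D_C^\beta f_\beta)$ rather than merely the subordinated object; this reconciliation is precisely where the hypothesis $f_{\beta\alpha}(0)=f_\beta(0)$ is consumed, and getting it to match exactly is the crux of the argument.

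As an independent cross-check I would run the whole computation on the Laplace side: apply \eqref{eq:LC} once with order $\beta\alpha$ on the left and twice (as a composition) on the right, substitute the transform rule \eqref{transformation}, and reduce both sides to expressions in $\widetilde f$, verifying that they agree once the initial-value boundary terms cancel. That transform calculation is mechanical but valuable, since it simultaneously pins down the integrability needed to justify the Fubini interchange above (the ``if the integrals exist'' caveat of part (1)).
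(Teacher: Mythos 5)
You have proved only part (4) of the four\-/part lemma: parts (1)--(3) are nowhere established, and part (2) in particular is the engine of your entire argument, so as it stands the proposal assumes the bulk of what is to be shown. The missing pieces are short but are precisely where the hypotheses \eqref{transformation} and $f_\beta(0)=f(0)$ get consumed: the paper proves (1) and (2) by taking the Laplace transform of $\int_0^\infty f(x)g_\beta(x;t)\,dx$, invoking \eqref{eq:lw2} to reduce it to $s^{\beta-1}\widetilde f\left(s^{\beta}\right)$, and matching against \eqref{eq:LC} after writing $s^{\beta-1}\bigl(s^{\beta}\widetilde f(s^{\beta})-f(0)\bigr)=s^{\beta}\widetilde f_\beta(s)-s^{\beta-1}f_\beta(0)$; it then proves (3) by reducing to (2) through the identity $f_\beta^{(n-1)}=\bigl(f^{(n-1)}\bigr)_\beta$ and $D_C^{n+\beta-1}f_\beta=D_C^{\beta}f_\beta^{(n-1)}$. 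You need to supply these before part (4) can legitimately lean on them.

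For part (4) itself your route coincides with the paper's: apply part (2) at order $\beta\alpha$, insert the subordination identity \eqref{eq:CK}, interchange the integrals, and recognize the inner integral as $D_C^{\beta}f_\beta(s)$. Your instinct that the final identification $\int_0^\infty D_C^{\beta}f_\beta(s)\,g_\alpha(s;t)\,ds=D_C^{\alpha}\bigl(D_C^{\beta}f_\beta\bigr)(t)$ is the crux is well placed --- the paper simply asserts this equality without comment. Be aware, however, that your proposed closing move (``apply part (2) a third time with base function $f_\beta$ and order $\alpha$'') does not land as stated: that application yields $D_C^{\alpha}f_{\beta\alpha}(t)=\int_0^\infty f_\beta^{(1)}(s)\,g_\alpha(s;t)\,ds$, whose integrand is $f_\beta^{(1)}$, not $D_C^{\beta}f_\beta$, and these differ in general. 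The reliable way to close the step is the Laplace\-/side computation you describe as a ``cross-check'': using \eqref{eq:lw2} the $g_\alpha$\-/integral has transform $\sigma^{\alpha-1}\,\mathcal{L}\bigl(D_C^{\beta}f_\beta\bigr)(\sigma^{\alpha})=\sigma^{2\alpha\beta-1}\widetilde f\left(\sigma^{\alpha\beta}\right)-\sigma^{\alpha\beta-1}f(0)$, which by \eqref{eq:LC} and \eqref{transformation} is exactly $\mathcal{L}\bigl(D_C^{\beta\alpha}f_{\beta\alpha}\bigr)(\sigma)$; that calculation should be promoted from a cross-check to the actual proof of the last equality.
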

\begin{remark}
The condition in equation \eqref{transformation21} of part (3), is equivalent to stating that $f_\beta^{(n-1)}=\left(f^{(n-1)}\right)_\beta$ in the sense of equation \eqref{transformation}.
\end{remark}
\begin{proof}
\leavevmode
\begin{enumerate}
\item To show sufficiency, we use the Laplace transform as follows:
\begin{align*}
\begin{split}
\mathcal{L}\left(\int_{0}^{\infty }f(x)g_{\beta}(x;\cdot)dx\right)(s) &= \int_{0}^{\infty} \int_{0}^{\infty} e^{-st}f(x)g_{\beta }(x;t)dxdt \\
\text{by equation \eqref{eq:lw2}} &= \int_{0}^{\infty}f(x)s^{\beta-1} e^{-xs^{\beta}}dx \\
&= s^{\beta-1} \widetilde{f}\left(s^{\beta}\right)  \\
&= \widetilde{f}_{\beta}(s) = \mathcal{L}\left(f_{\beta}(\cdot)\right)(s).   
\end{split}
\end{align*}
Necessity follows from the same lines.

\item We will show that the Laplace transform of the right-hand side of equation \eqref{eq:Clm} is equal to that of the left hand side which is given by \eqref{eq:LC}. The Laplace transform of the right hand side of equation \eqref{eq:Clm} is given by 
\begin{align*}
\begin{split}
\mathcal{L}\left(\int_{0}^{\infty}f^{(1)}(x) g_{\beta}(x;\cdot)dx\right)(s) &=  \int_{0}^{\infty}\int_{0}^{\infty}e^{-st}f^{(1)}(x)g_{\beta }(x;t)dxdt \\
\text{by equation \eqref{eq:lw2}} &= \int_{0}^{\infty} f^{(1)}(x)s^{\beta-1} e^{-xs^{\beta}}dx  \\
&= s^{\beta-1}\left(s^{\beta}\widetilde{f}(s^{\beta}) - f(0)\right) \\
&= s^{\beta}\widetilde{f}_{\beta}(s) - s^{\beta-1}f_{\beta}(0) \\ 
&= \mathcal{L}\left(D_{C}^{\beta}f_{\beta}(\cdot)\right)(s). 
\end{split}
\end{align*}

\item  It is easy to prove that condition \eqref{transformation} is equivalent to 
\begin{equation}\label{transformation2}
    f^{(n-1)}_{\beta}(t)=(f^{(n-1)})_{\beta}(t)
\end{equation}
   
By properties of Caputo derivative, conditions \eqref{transformation2}  and $f_{\beta }^{(n-1)}(0)=f^{(n-1)}(0)$ we obtain  

\begin{equation*}\label{transformation3}
D_{C}^{\beta+n-1 }f_{\beta}(t)=D_{C}^{\beta }f^{(n-1)}_{\beta}(t)=D_{C}^{\beta }(f^{(n-1)})_{\beta}(t)=\int_{0}^{\infty }f^{(n)}(x)g_{\beta }(x;t)dx.
\end{equation*}

as we wanted.

\item Using equation \eqref{eq:CK}, 
\begin{equation*}
D_{C}^{\beta \alpha}f_{\beta \alpha}(t)=\int_{0}^{\infty}f'(x) g_{\beta\alpha}(x;t)dx = \int_{0}^{\infty}f'(x) \int_{0}^{\infty} g_{\beta }(x;s)g_{\alpha}(s;t)dsdx,
\end{equation*}
and using part 2, we obtain 
\begin{align*}
\begin{split}
D_{C}^{\beta \alpha }f_{\beta \alpha}(t) &= \int_{0}^{\infty }g_{\alpha
}(s;t)\int_{0}^{\infty }f^{\prime }(x)g_{\beta}(x;s)dxds \\ 
&= \int_{0}^{\infty}D_{C}^{\beta }f_{\beta }(s)g_{\alpha}(s;t)ds = D_{C}^{\alpha }\left(D_{C}^{\beta }f_{\beta }(t)\right). 
\end{split}
\end{align*}
\end{enumerate}
\end{proof}
\begin{theorem}\label{thm21}
\label{TheoremCaputo} The linear fractional differential equation 
\begin{equation}
\sum_{k=1}^{n}a_{k}D_{C}^{k+\beta -1}y(t)+a_{n+1}y(t)=F_{\beta }(t)
\label{eq:ODE}
\end{equation}%
such that $ y(0)=y_{0}$ and $D_{C}^{\beta+i-1} y(0)=y_{i}$ for $i=1,\ldots,n-1$ and $0<\beta <1$, has a solution given by 
\begin{equation*}
y_{\beta }(t)=\int_{0}^{\infty }z(x)g_{\beta }(x;t)dx
=\mathbb{E}\left[z(\mathcal{T}_{\beta}(t))\right]. \end{equation*}%
Where $z(t)$ is the solution of the linear ordinary differential equation 
\begin{equation}
\sum_{k=1}^{n}a_{k}z^{(k)}(t)+a_{n+1}z(t)=F(t)  \label{eq:FDE}
\end{equation}%
such that $z^{(k)}(0)=y_{i}$ for $k=0,\ldots,n-1$ and both solutions satisfy the following condition
\begin{equation*}
\mathcal{L}\left(y^{(n-1)}(\cdot)\right)(s) = s^{\beta -1} \mathcal{L}\left(z^{(n-1)}(\cdot)\right)\left(s^{\beta }\right)
\end{equation*}
for $s\in [0,\infty)$.
Where $F_{\beta }(t) = \int _{0}^{\infty }F(x)g_{\beta
}(x;t)dx$ and the process $\mathcal{T}_{\beta}(t)$ can be seen as the inverse of a $\beta$ stable subordinator whose density is $g_{\beta}(x,t)$. 
\end{theorem}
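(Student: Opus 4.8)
The plan is to build the fractional solution directly out of the classical one by the Wright-type averaging map of Lemma~\ref{lem3}, and then check that this map intertwines the operator $\sum_{k}a_{k}D_{C}^{k+\beta-1}+a_{n+1}$ with the integer-order operator $\sum_{k}a_{k}D^{k}+a_{n+1}$. Concretely, I would first solve the ordinary differential equation \eqref{eq:FDE} for $z$ under the prescribed integer-order data $z^{(k)}(0)=y_{k}$, $k=0,\dots,n-1$, which is a standard constant-coefficient linear ODE problem. I would then define the candidate $y_{\beta}(t)=\int_{0}^{\infty}z(x)g_{\beta}(x;t)\,dx=\mathbb{E}\left[z(\mathcal{T}_{\beta}(t))\right]$ and show that it satisfies both the fractional equation \eqref{eq:ODE} and its initial conditions.

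The core of the argument is a term-by-term application of part~(3) of Lemma~\ref{lem3}. Since $y_{\beta}$ is \emph{defined} as the transform of $z$, part~(1) immediately yields the base relation $\widetilde{y}_{\beta}(s)=s^{\beta-1}\widetilde{z}(s^{\beta})$, i.e.\ condition \eqref{transformation} with $f=z$ and $f_{\beta}=y_{\beta}$; this is precisely the compatibility condition required in the statement, now written at the top derivative level. By the equivalence recorded in the proof of part~(3) and in the Remark, this single relation cascades to $y_{\beta}^{(k-1)}=(z^{(k-1)})_{\beta}$ together with the matching initial values for every $k=1,\dots,n$, which are exactly the hypotheses needed to invoke part~(3) with $n$ replaced by $k$. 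Hence for each $k$,
\[
D_{C}^{k+\beta-1}y_{\beta}(t)=\int_{0}^{\infty}z^{(k)}(x)g_{\beta}(x;t)\,dx .
\]
Multiplying by $a_{k}$, summing over $k=1,\dots,n$, adding $a_{n+1}y_{\beta}$, and pulling the common kernel $g_{\beta}(x;t)$ out of the finite sum turns the left side of \eqref{eq:ODE} into $\int_{0}^{\infty}\big[\sum_{k=1}^{n}a_{k}z^{(k)}(x)+a_{n+1}z(x)\big]g_{\beta}(x;t)\,dx=\int_{0}^{\infty}F(x)g_{\beta}(x;t)\,dx=F_{\beta}(t)$, where the penultimate equality uses that $z$ solves \eqref{eq:FDE} and the last uses the definition of $F_{\beta}$. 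This establishes that $y_{\beta}$ solves \eqref{eq:ODE}.

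For the initial data I would use that $g_{\beta}(\cdot;t)$ concentrates at $x=0$ as $t\to0^{+}$, consistent with $\mathbb{E}\left[(\mathcal{T}_{\beta}(t))^{k}\right]=\Gamma(k+1)t^{k\beta}/\Gamma(k\beta+1)\to0$, so that $\int_{0}^{\infty}h(x)g_{\beta}(x;t)\,dx\to h(0)$. Applying this to $h=z$ gives $y_{\beta}(0)=z(0)=y_{0}$, and applying it to the representation above gives $D_{C}^{\beta+k-1}y_{\beta}(0)=z^{(k)}(0)=y_{k}$ for $k=1,\dots,n-1$, matching the prescribed fractional data.

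The step I expect to be the main obstacle is analytic rather than algebraic. The identity $D_{C}^{k+\beta-1}y_{\beta}(t)=\int_{0}^{\infty}z^{(k)}(x)g_{\beta}(x;t)\,dx$ already encapsulates the interchange of the fractional operator with the integral, but it is only valid when the integrability hypotheses behind Lemma~\ref{lem3} hold, i.e.\ when $z$ and its derivatives have mild enough growth that the iterated integrals and the transforms $\widetilde{z}(s^{\beta})$ converge on $\Re(s)>0$; I would state these as the standing assumptions under which ``the integrals exist.'' The same uniform control is what licenses the $t\to0^{+}$ concentration limit used for the initial conditions, so the delta-convergence argument is where I would spend the most care.
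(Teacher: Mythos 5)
Your proof follows the same route as the paper's: take the $\beta$-Wright type transform of the integer-order equation \eqref{eq:FDE} term by term and invoke Lemma \ref{lem3} to convert each $\int_{0}^{\infty}z^{(k)}(x)g_{\beta}(x;t)\,dx$ into $D_{C}^{k+\beta-1}y_{\beta}(t)$; the paper's proof is exactly this two-line computation, and you add two things it omits, namely the verification of the initial data via the concentration of $g_{\beta}(\cdot;t)$ at the origin as $t\to0^{+}$ and an explicit statement of the integrability hypotheses.

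The one step you should not wave through is the ``cascade.'' From $\widetilde{y}_{\beta}(s)=s^{\beta-1}\widetilde{z}(s^{\beta})$ it does \emph{not} follow that $y_{\beta}^{(k-1)}=(z^{(k-1)})_{\beta}$: one has $\mathcal{L}(y_{\beta}')(s)=s\,\widetilde{y}_{\beta}(s)-y_{\beta}(0)=s^{\beta}\widetilde{z}(s^{\beta})-z(0)$, whereas $\mathcal{L}\bigl((z')_{\beta}\bigr)(s)=s^{\beta-1}\bigl(s^{\beta}\widetilde{z}(s^{\beta})-z(0)\bigr)$, and these agree only when $\beta=1$. Consequently, for $k\geq2$ the hypothesis of part (3) of Lemma \ref{lem3} is not satisfied by the candidate $y_{\beta}=(z)_{\beta}$, and the identity $D_{C}^{k+\beta-1}y_{\beta}(t)=\int_{0}^{\infty}z^{(k)}(x)g_{\beta}(x;t)\,dx$ is not justified; note also that the theorem's own compatibility condition is imposed at level $n-1$ while the claimed solution is the level-$0$ transform, which are mutually inconsistent for $n\geq2$. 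This is precisely the point at which the paper's one-line proof (and the remark following Lemma \ref{lem3}) is silent, so your reconstruction faithfully reproduces the published argument, gap included: for $n=1$ only part (2) of the lemma is needed and everything you wrote goes through, but for $n\geq2$ either the candidate solution or the stated compatibility condition must be modified before the termwise application of the lemma is licensed.
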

{\bf Remark.}
The function $F(x)$ could be found using Laplace transform and Lemma \ref{lem3} part a.
\begin{equation*}
\widetilde{F}_{\beta }(s)=s^{\beta -1}\widetilde{F}\left(s^{\beta}\right).
\end{equation*}

\begin{proof}
Taking a $\beta -$Wright type transformation on both sides of equation %
\eqref{eq:FDE}, we obtain%
\begin{equation*}
\sum_{k=1}^{n}a_{k}\int_{0}^{\infty }z^{(k)}(x)g_{\beta
}(x;t)dx+a_{n+1}\int_{0}^{\infty}z(x)g_{\beta }(x;t)dx=\int_{0}^{\infty
}F(x)g_{\beta }(x;t)dx,
\end{equation*}

\bigskip and applying Lemma \ref{lem3} we obtain 
\begin{equation*}
\sum_{k=1}^{n}a_{k}D_{C}^{k+\beta -1}y_{\beta }(t)+a_{n+1}D_{C}^{\beta
-1}y_{\beta }(t)=F_{\beta }(t)
\end{equation*}

as we wanted.
\end{proof}

In the following, let $0 < \beta \leq 1$. The best method to find 
\begin{equation*}
f_{\beta}(t)=\int_0^\infty f(x)g_{\beta}(x;t)dx
\end{equation*}
is by using Taylor's expansion of $f(t)$ about $0$ giving 
\begin{equation}
f_{\beta}(t)=\sum_{n=0}^\infty f^{(n)}(0) \dfrac{t^{\beta n}}{%
\Gamma(n\beta+1)}.
\label{eq:taylor}
\end{equation}
That relationship can be used to find many fractional analogue functions
that can be found in the literature. It will be also seen through the
following examples.

\begin{example}[Fractional velocity]
\label{exm:exm1}
The solution of the FDE 
$D_{C}^{\beta }y_{\beta }(t)=c$ with $y_{\beta }(0)=y_{0}$, where $c$ is a
real-valued constant is given by 
\begin{equation*}
   y_{\beta }(t)=\int_{0}^{\infty
}(y_{0}+cx)g_{\beta }(x;t)dx=y_{0}+\dfrac{ct^{\beta }}{\Gamma (\beta +1)} 
\end{equation*}
since $y(t)=y_{0}+ct$ solves $Dy(t)=c$, with $y(0)=y_{0}$, this solution agrees with the literature \cite{Jin2021}. 
That is, $y_{\beta}(t)=\mathbb{E}[y_0+c\mathcal{T}_{\beta }(t)]$. 
\end{example}
\begin{example}[Fractional growth/decay models]
\label{exm:exm2}
The solution of the FDE $D_C^{\beta} y(t)=\lambda y(t)$ with $y(0)=y_0$ where $\lambda$
is a real-valued constant is given by
\begin{equation*}
y(t)=\int_0^\infty y_0 e^{\lambda x}
g_{\beta}(x;t)dx= y_0 E_{\beta}\left(\lambda t^{\beta}\right)
\end{equation*}
since $z(x)=y_0 e^{\lambda x}$ solves $D z(x)=\lambda z(x)$, with $z(0)=y_0$, this solution agrees with the literature \cite{Jin2021}. That is, $y_{\beta}(t)=\mathbb{E}[y_0 e^{\lambda\mathcal{T}_{\beta }(t)}]$. See section \ref{numerical} for graphical representation. 
\end{example}
\begin{example}[Fractional oscillations]
\label{exm:exm3}
The solution of the FDE $D_C^{\beta+1} y(t)=-\omega^2 y(t)$ with $y(0)=0$ and $y'(0)=1$, where $\omega$ is a real-valued constant is given by 
\begin{equation*}
y(t)=\int_0^\infty \sin(\omega x)g_{\beta}(x;t)dx= \sin_{\beta}\left(\omega t\right) 
\end{equation*}
since $z(x)=\sin(\omega x) $ solves $D^2 z(x)=-\omega^{2} z(x)$, with $z(0)=0$. That is, $y_{\beta}(t)=\mathbb{E}[ \sin(\omega\mathcal{T}_{\beta }(t))]$. 

Using \eqref{eq:taylor} the fractional analogue of sine and cosine functions are given by 
\begin{equation*}
\sin_{\beta}\left(t\right) = \sum_{n=0}^\infty (-1)^{n} \dfrac{t^{(2n+1)\beta }}{\Gamma((2n+1)\beta +1)} = t^{\beta}E_{2\beta,\beta+1}\left(-t^{2\beta}\right)
\end{equation*}
and, 
\begin{equation*}
\cos_{\beta}\left(t\right) = \sum_{n=0}^\infty (-1)^n \dfrac{t^{2n\beta }}{%
\Gamma(2n\beta +1)} = E_{2\beta}\left(-t^{2\beta}\right),
\end{equation*}
which agrees with \cite{Stanislavsky2004}. It is not hard then to see that $D_C^{\beta} \cos_{\beta}\left(t\right) = -
\sin_{\beta}\left(t\right)$ and $D_C^{\beta} \sin_{\beta} \left(t\right) = \cos_{\beta}\left(t\right)$.
\end{example}

\section{Green functions for
fractional partial differential equations}

In the following we will use the notation $_{C}D_{t}^{\beta }$ and $_{RF}D_{x}^{\alpha }$ for Caputo and Riesz-Feller derivatives, respectively, to identify the variable with respect to which the derivatives are calculated. 

\subsection{The fractional diffusion equation}

In this example, using the approach of the previous section, we find the
Green function for the fractional diffusion equation
\begin{equation}
\label{Fractional Heat}
\left\{\begin{split}
_{C}D_{t}^{\beta }u(x,t) &=\, _{RF}D_{x}^{\alpha,\theta }u(x,t) \\
u(x,0) &= f(x)
\end{split}\qquad
\begin{split}
(x,t) &\in \mathbb{R}\times [0,\infty) \\
x &\in \mathbb{R}
\end{split}\right.
\end{equation}
with $0< \beta \leq 1$, $0<\alpha \leq 2$ and $u(\pm\infty,t)=0$, $t>0$. Applying the Fourier transform to $x$, we obtain
\begin{align*}
_{C}D_{t}^{\beta }\widehat{u}(k,t) &= -\psi _{\alpha }^{\theta }(k)\widehat{u}(k,t) \\
\widehat{u}(k,0) &= \widehat{f}(k).
\end{align*}
Solving this ordinary differential equation using Lemma \ref{lem3}, we obtain
\begin{equation*}
\widehat{u}(k,t)=\int_{0}^{\infty} \widehat{f}(k)e^{-\psi_{\alpha}^{\theta }(k)x} g_{\beta}(x;t)dx = \widehat{f}(k) E_{\beta} \left(-\psi_{\alpha}^{\theta}(k)t^{\beta}\right),
\end{equation*}
see~\hyperref[exm:exm2]{example 2.1}. Using the Inverse Fourier Theorem and the convolution theorem we get
\begin{equation*}
u(x,t) = \int_{-\infty}^{\infty} G_{\alpha,\beta}^{\theta}(x-y,t)f(y)dy
\end{equation*}
where the Green function is given by
\begin{equation*}
G_{\alpha,\beta}^{\theta}(x,t) = \dfrac{1}{2\pi} \int_{-\infty}^{\infty} e^{ikx}E_{\beta}\left(-\psi_{\alpha}^{\theta}(k) t^{\beta}\right)dk.
\end{equation*}
In the particular case of $\theta =0$,
\begin{equation}\label{frac_diff_soln}
G_{\alpha,\beta}^{0}(x,t) = \dfrac{1}{2\pi}\int_{-\infty}^{\infty} e^{ikx}E_{\beta}\left(-\lvert k\rvert^{\alpha}t^{\beta}\right)dk.
\end{equation}
For the case $\beta =1,\alpha =2$, we obtain the classical heat equation
\begin{equation*}
\left\{\begin{split}
\partial_{t}u(x,t) &= \partial_{x}^{2}u(x,t) \\
u(x,0) &= f(x).
\end{split}\qquad
\begin{split}
(x,t) &\in \mathbb{R}\times [0,\infty) \\
x &\in \mathbb{R}
\end{split}\right.
\end{equation*}
Since $E_{1}(\lambda t) = e^{\lambda t}$, we obtain for $x\in \mathbb{R}$ and $t>0$
\begin{equation*}
G_{1,2}^{0}(x,t) = \dfrac{1}{2\pi }\int_{-\infty}^{\infty} e^{ikx- tk^{2}}dk=\dfrac{1}{\sqrt{4\pi t}}e^{-\frac{x^{2}}{4 t}}
\end{equation*}
or $L_2^0(x,t)$, by using standard formulas from Fourier transform. This result agrees with the Green function of the standard heat equation, see for example~\cite{Suazo2011}.
\begin{corollary}
The solution of the fractional heat equation \eqref{Fractional Heat} could be written as
\begin{equation}\label{new}
    u_{\alpha,\beta}(x,t)=\int_{0}^\infty \int_{0}^\infty L_2^0(x,\tau)L_{\alpha/2}^{-\alpha/2}(\tau,s)g_\beta(s,t) d\tau ds,
\end{equation}
where $L_2^0(x,\tau)$ is the solution of the classical heat equation.
\end{corollary}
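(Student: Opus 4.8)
The plan is to prove \eqref{new} by collapsing the inner integral with the stable-subordination identity \eqref{eq:CL} and then matching Fourier transforms against the kernel \eqref{frac_diff_soln}. First I would apply \eqref{eq:CL} with the substitution $\beta\mapsto 2$, $\theta\mapsto 0$, $\alpha\mapsto \alpha/2$. Because $0<\alpha\le 2$ forces $0<\alpha/2\le 1$, the hypothesis of \eqref{eq:CL} is met, and since $2\cdot(\alpha/2)=\alpha$ and $0\cdot(\alpha/2)=0$ its left-hand side is $L_\alpha^0(x;s)$. Performing the $\tau$-integration in \eqref{new} pointwise in $s$ thus reduces the double integral to the single subordination integral
\begin{equation*}
u_{\alpha,\beta}(x,t)=\int_0^\infty L_\alpha^0(x;s)\,g_\beta(s;t)\,ds,
\end{equation*}
that is, the symmetric $\alpha$-stable kernel $L_\alpha^0$ time-changed by the inverse $\beta$-stable subordinator with density $g_\beta$.

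Next I would take the Fourier transform in $x$ and interchange the order of integration; since $L_\alpha^0(\cdot;s)$ and $g_\beta(\cdot;t)$ are probability densities, the modulus of the integrand has total mass $1$, so Fubini's theorem applies without any further estimate. Applying \eqref{eq:fw1} with $\theta=0$, for which $\psi_\alpha^0(k)=|k|^\alpha$, the spatial transform of $L_\alpha^0(\cdot;s)$ equals $e^{-|k|^\alpha s}$, whence
\begin{equation*}
\widehat{u}_{\alpha,\beta}(k,t)=\int_0^\infty e^{-|k|^\alpha s}\,g_\beta(s;t)\,ds=E_\beta\!\left(-|k|^\alpha t^\beta\right),
\end{equation*}
the last equality being the Laplace transform \eqref{eq:lw} of $g_\beta(\cdot;t)$ evaluated at $s=|k|^\alpha$. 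Since $E_\beta(-|k|^\alpha t^\beta)$ is exactly the integrand symbol in \eqref{frac_diff_soln}, we obtain $\widehat{u}_{\alpha,\beta}(k,t)=\widehat{G}_{\alpha,\beta}^0(k,t)$, and inverting the Fourier transform yields $u_{\alpha,\beta}=G_{\alpha,\beta}^0$, the claimed solution. As a consistency check, $L_2^0(\cdot;\tau)$ has transform $e^{-k^2\tau}$, matching its identification as the classical heat kernel.

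I expect the delicate points to be bookkeeping rather than analysis. One must verify the parameter matching in \eqref{eq:CL}, namely $\beta\alpha=\alpha$, $\theta\alpha=0$ and $0<\alpha/2\le 1$, and confirm that $L_\alpha^0$ with $\theta=0$ is well defined throughout $0<\alpha\le 2$ (the symmetry constraint $\theta\le\min(\alpha,2-\alpha)$ holds trivially at $\theta=0$). The single interchange of integration in the Fourier step is the only measure-theoretic issue, and it is immediate by Fubini because all the kernels are probability densities with total mass one, so no dominated-convergence argument is required.
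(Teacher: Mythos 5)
Your argument is correct, and its core step is the same as the paper's: both proofs hinge on applying the stable-subordination identity \eqref{eq:CL} with $\beta\mapsto 2$, $\theta\mapsto 0$ and subordinator index $\alpha/2$ to write $L_{\alpha}^{0}(x;s)=\int_{0}^{\infty}L_{2}^{0}(x;\tau)\,L_{\alpha/2}^{-\alpha/2}(\tau;s)\,d\tau$, valid precisely because $0<\alpha\le 2$ gives $0<\alpha/2\le 1$. Where you differ is in the other half of the argument: the paper simply cites Mainardi--Luchko--Pagnini for the single-subordination representation $u_{\alpha,\beta}(x,t)=\int_{0}^{\infty}L_{\alpha}^{0}(x;s)\,g_{\beta}(s;t)\,ds$ of the kernel \eqref{frac_diff_soln}, whereas you prove it from scratch by taking the Fourier transform in $x$, using \eqref{eq:fw1} with $\psi_{\alpha}^{0}(k)=|k|^{\alpha}$ and the Laplace transform \eqref{eq:lw} of $g_{\beta}(\cdot;t)$ to land on $E_{\beta}(-|k|^{\alpha}t^{\beta})$, which is exactly the symbol in \eqref{frac_diff_soln}. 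This makes your proof self-contained within the identities already established in the preliminaries, at the cost of a Fubini interchange that you correctly dispose of since all kernels involved are probability densities. One small caveat, which applies equally to the paper's statement: what is actually being identified in \eqref{new} is the Green function $G_{\alpha,\beta}^{0}$ (equivalently, the solution with $\theta=0$ and $f=\delta$); for general initial data one must still convolve with $f$, and for $\theta\neq 0$ in \eqref{Fractional Heat} the symmetric decomposition through $L_{2}^{0}$ does not apply.
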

\begin{proof} As was shown in \cite{MainYuriPagni2001}, it could be shown that the solution in \eqref{frac_diff_soln} could be rewritten as
$$u_{\alpha,\beta}(x,t)=\int_{0}^\infty L_{\alpha}^{0}(x,s)g_\beta(s,t) ds,$$ but for $0<\alpha/2 \leq 1$ 
\begin{equation}  
\label{eq:CL1}
L_{\alpha}^{0}(x,s) = \int_{0}^{\infty}
L_{2}^{0}(x,\tau) L_{\alpha/2}^{-\alpha/2}(\tau,s) d\tau,
\end{equation}
by equation \eqref{eq:CL}. Then the result follows.
\end{proof}

Based on this corollary, $u_{\alpha,\beta}(x,t)=\mathbb{E} \left[ L_{2}^{0}(x,
\mathcal{L}_{\alpha/2}^{-\alpha/2}(\mathcal{T}_{\beta}(t))) \right]$ where $\mathcal{L}_{\alpha/2}^{-\alpha/2}(t)$  is the $\alpha/2-$L\'evy process with $\theta=-\alpha/2$ and $\mathcal{T}_{\beta}(t)$ is the random $\beta-$time process. See section \ref{numerical} for graphical representation. 

The moments of $X_{\alpha,\beta}(t)\sim u_{\alpha,\beta}(\cdot,t)$ are given by \cite{MainardiPagni2008},
\begin{equation}
\mathbb{E}(|X_{\alpha,\beta}(t)|^s) =   t^{\frac{s\beta}{\alpha}} \frac{\Gamma(1-\frac{s}{\alpha})\Gamma(1+\frac{s}{\alpha})\Gamma(1+s)}{\Gamma(1-\frac{s}{2})\Gamma(1+\frac{s}{2})\Gamma(1+\frac{s\beta}{\alpha})}  
\end{equation}
for $-\min(1,\alpha)<\mathcal{R}(s)<\alpha$. That formula shows different regimes of diffusion, subdiffusion and superdiffusion based on whether $\frac{\alpha}{2}=\beta$, $\frac{\alpha}{2}>\beta$, or $\frac{\alpha}{2}<\beta$, respectively. 

Moreover, Equation \eqref{new} shows that the $\alpha-\beta$ fractional process $\{X_{\alpha,\beta}(t),t\geq 0\}$ is equivalent in distribution to the subordinated process $\{B(\mathcal{L}_{\alpha/2}^{-\alpha/2}(\mathcal{T}_{\beta}(t))),t\geq 0\}$, where $\{B(t),t\geq 0\}$ is a Brownian motion. That relationship postulates that L\'evy flights are random dilation, with probability distribution $L_{\alpha/2}^{-\alpha/2}$, of the standard deviation or the time parameter in the Brownian motion. That dilation results in an expand in the range of possible displacement by magnitude beyond the regular tails of the standard Gaussian distribution. The process $\{B(\mathcal{L}_{\alpha/2}^{-\alpha/2}(t)),t\geq 0\}$ was introduced in \cite{Huff2002}, and used in \cite{BenoitMandelbrotandHowardM.Taylor2018} to model stock price differences. See also more about those subordinated processes in \cite{Barndorff-Nielsen}.

\subsection{Fractional wave equation}

If we consider d'Alembert solution $u(x,t)=(f(x-kt)+f(x+kt))/2$ of the wave
equation 
\begin{equation}
\frac{\partial ^{2}u(x,t)}{\partial t^{2}}=k^2\frac{\partial ^{2}u(x,t)}{%
\partial x^{2}},  \label{waveequation}
\end{equation}
where $f$ is a smooth enough function. And if we take the $\beta -$ Wright
type transformation on both sides of  equation \eqref{waveequation},  we
obtain 
\begin{equation*}
\int_{0}^{\infty }\dfrac{\partial ^{2}u(x,s)}{\partial s^{2}}g_{\beta
}(s;t)ds=k^{2}\int_{0}^{\infty }\dfrac{\partial ^{2}}{\partial x^{2}}%
u(x,s)g_{\beta }(s;t)ds=k^{2}\dfrac{\partial ^{2}}{\partial x^{2}}%
\int_{0}^{\infty }u(x,s)g_{\beta }(s;t)ds
\end{equation*}
by Lemma \eqref{lem3} we see that 
\begin{equation}
u_{\beta }(x,t)=1/2\int_{0}^{\infty }u(x,s)g_{\beta }(s;t)ds=\int_{0}^{\infty
}[f(x-ks)+f(x+ks)]g_{\beta }(s;t)ds=(f_{\beta }(x-kt)+f_{\beta }(x+kt))/2
\label{fractionalwave}
\end{equation}
is a solution of the fractional wave equation
\begin{equation*}
_{C}D_{t}^{\beta +1}u_{\beta }(x,t)=k^{2}\dfrac{\partial ^{2}u_{\beta }}{%
\partial x^{2}}(x,t).
\end{equation*}
We refer to the reader to \cite{Fujita1990}, \cite{Meerschaert2019} and \cite{Mainardibook2010}, for previous work on fractional d'Alambert solution.

\subsection{Examples of Green functions for fractional Schr\"{o}dinger equation.} We show in this section that we can apply Lemma \eqref{lem3} to Schr\"{o}dinger equation. Since the work of \cite{Bingham1971} the connection with inverse stable subordinators and Wright type functions was made for difussion equations. These examples might raise the interest of specialists inverse stable subordinators to find similar results for dispersion equations.

One of the classical one dimensional fractional Schr\"{o}dinger equations with Caputo derivative in time considered in the literature is given by
\begin{equation}
i{}_{0}D_{t}^{\beta } \phi(x,t)=k\frac{\partial ^{2}\phi(x,t)}{%
\partial x^{2}}+c\phi(x,t),  \label{Schrodingerequation}
\end{equation}
Given that when the skewness $\theta=0$, we have ${}_{x}D_{0}^{\alpha}\phi=\frac{\partial ^{2}\phi(x,t)}{%
\partial x^{2}}$, we consider a more general version  
\begin{equation*}
\left\{ \begin{aligned} \begin{split} i{}_{0}D_{t}^{\beta }\phi _{\beta
,\alpha }(x,t) & = -h{}_{x}D_{\theta}^{\alpha+1} \phi_{\beta ,\alpha }(x,t)
+c\phi _{\beta ,\alpha } \\ \phi _{\beta ,\alpha }(x,0)& = f(x)
\end{split}\qquad \begin{split} (x,t)& \in \mathbb{R}\times \lbrack
0,\infty) \\ x& \in \mathbb{R} \end{split} \end{aligned} \right.
\end{equation*}
Let $0<\beta ,\alpha \leq 1$. The Green function of the time and space-fractional
linear Schr\"odinger equation is as follows. The linear Schr\"{o}dinger Fourier transform is

\begin{equation*}
\left\{ \begin{aligned} {}_{0}D_{t}^{\beta }\widehat{\phi_{\beta
,\alpha}}(x,t) &=-i\widehat{\phi_{\beta
,\alpha}}(k,t)\left[h\psi^{\theta}_{\alpha+1}(k)+ic\right] \\
\widehat{\phi_{\beta ,\alpha}}(k,0) &= F(k) \end{aligned} \right.
\end{equation*}

Then, applying lemma 3, the solution is given by $\widehat{\phi _{\beta
,\alpha }}(k,t)=\int_{0}^{\infty }z(x)g_{\beta }(x;t)dx$ where the function $%
z(t)=F(k)\exp \left( -it\left[ h\psi _{\alpha +1}^{\theta }(k)+c\right]
\right) $ and solves the last Fractional PDE initial problem. Now, setting
the new solution we have that

\begin{equation*}
\widehat{\phi _{\beta ,\alpha }}(k,t)=F(k)E_{\beta }\left( -it^{\beta }\left[
h\psi _{\alpha +1}^{\theta }(k)+c\right] \right) .
\end{equation*}

Next, applying the Fourier transform inverse and convolution theorems lead
us to

\begin{equation*}
\phi(x,t)=\mathcal{F}^{-1}\left\{F(k)E_{\beta}\left(-it^{\beta}\left[%
h\psi^{\theta}_{\alpha+1}(k)+c\right]\right)\right\} =
\int_{-\infty}^{\infty }G(x-y,t)f(y)dy
\end{equation*}

where the Green function is given by

\begin{equation*}
G(x,t)=\dfrac{1}{2\pi }\int_{-\infty }^{\infty
}\exp(ikx)E_{\beta}\left(-it^{\beta}\left[h\psi^{\theta}_{\alpha+1}(k)+c%
\right]\right)dk
\end{equation*}

In the particular case $\theta = 0$, and $\beta=\alpha = 1$ it results to

\begin{equation*}
G(x,t)=\dfrac{1}{2\pi }\int_{-\infty }^{\infty }\exp (-ithk^{2}+ixk-itc).
\end{equation*}

And by complex integration see \cite{Suazo2009} we obtain

\begin{align*}
G(x,t) = \sqrt{\dfrac{1}{4i\pi th}}\exp\left(\dfrac{-x^{2}}{4ith} -
itc\right).
\end{align*}

Similarly, we can consider the fractional Schr\"{o}dinger equation given by  
\begin{equation}
i{}_{0}D_{t}^{\beta } \phi(x,t)=k\frac{\partial ^{2}\phi(x,t)}{%
\partial x^{2}}+\frac{\partial \phi(x,t)}{%
\partial x},  \label{Schrodingerequation}
\end{equation}

we find the Green function for the more general fractional Schr\"{o}dinger equation 

\begin{equation*}
\left\{ \begin{aligned} \begin{split} {}_{0}D_{t}^{\beta }\phi _{\beta
,\alpha }(x,t) & =ih{}_{x}D_{\theta}^{\alpha+1} \phi_{\beta ,\alpha }(x,t) -
ic{}_{x}D_{\theta}^{\alpha}\phi _{\beta ,\alpha} \\ \phi _{\beta ,\alpha
}(x,0)& = f(x) \end{split}\qquad \begin{split} (x,t)& \in \mathbb{R}\times
\lbrack 0,\infty) \\ x& \in \mathbb{R} \end{split} \end{aligned} \right.
\end{equation*}

After taking the Fourier transform we obtain

\begin{equation*}
\left\{ \begin{aligned} {}_{0}D_{t}^{\beta}\widehat{\phi_{\beta
,\alpha}}(x,t) & =-i\widehat{\phi_{\beta
,\alpha}}(k,t)\left[h\psi^{\theta}_{\alpha+1}(k)-c\psi^{\theta}_{\alpha}(k)%
\right] \\ \widehat{\phi_{\beta ,\alpha}}(k,0)& = F(k) \end{aligned} \right.
\end{equation*}

Then, applying lemma 3, the solution is given by $\widehat{\phi _{\beta
,\alpha }}(k,t)=\int_{0}^{\infty }z(x)g_{\beta }(x;t)dx,$where the function $%
z(x)=F(k)\exp \left( -ix\left[ h\psi _{\alpha +1}^{\theta }(k)-c\psi
_{\alpha }^{\theta }(k)\right] \right) $ and solves the last Fractional PDE
initial problem. Now, setting the new solution we have that

\begin{equation*}
\widehat{\phi _{\beta ,\alpha }}(k,t)=F(k)E_{\beta }\left( -it^{\beta }\left[
h\psi _{\alpha +1}^{\theta }(k)-c\psi _{\alpha }^{\theta }(k)\right] \right)
.
\end{equation*}

Next, applying the inverse Fourier transform and convolution theorems lead
us to

\begin{equation*}
\phi(x,t)=\mathcal{F}^{-1}\left\{F(k)E_{\beta}\left(-it^{\beta}\left[%
h\psi^{\theta}_{\alpha+1}(k)-c\psi^{\theta}_{\alpha}(k)\right]%
\right)\right\} = \int_{-\infty}^{\infty }G(x-y,t)f(y)dy
\end{equation*}

where the Green function is given by

\begin{equation*}
G(x,t)=\dfrac{1}{2\pi }\int_{-\infty }^{\infty
}\exp(ikx)E_{\beta}\left(-it^{\beta}\left[h\psi^{\theta}_{\alpha+1}(k)-c%
\psi^{\theta}_{\alpha}(k)\right]\right)dk
\end{equation*}

In the particular case $\theta = 0$, and $\beta=\alpha = 1$ it results to

\begin{equation*}
G(x,t)=\dfrac{1}{2\pi }\int_{-\infty }^{\infty }\exp \left(
-ithk^{2}+itc|k|+ixk\right) dk.
\end{equation*}

For the Green functions for the regular versions of Schr\"{o}dinger equations we refer the reader to \cite{Suazo2009}.
\section{Green functions for fractional PDEs with variable coefficients}
In this section we use Lie symmetry group methods and Lemma \ref{lem3} to
introduce families of fractional partial differential equations with variabe
coefficients exhibiting explicit solutions. More specifically, we show that
solutions for FDE with variable coefficients of the form 

\begin{equation}
_{C}D_{t}^{\beta }u_{\beta}(x,t)=\sigma (x)\dfrac{\partial u_{\beta}}{\partial x^2}(x,t)+\mu (x)\dfrac{\partial u_{\beta}}{\partial x}(x,t), 0<\beta \leq 1.
\label{VC diffusion}
\end{equation}
can be expressed as a Wright type transformation for PDEs that we define as 
\begin{equation}
u_{\beta }(x,t)=\int_{0}^{\infty }u(x,s)g_{\beta }(s,t)ds,
\label{Transformation PDE}
\end{equation}
where where $u(x,t)$ is the solution of the associated standard PDE 
\begin{equation}
u_{t}(x,t)=\sigma (x)u_{xx}(x,t)+\mu (x)u_{x}(x,t).
\label{PDE familly}
\end{equation}
\begin{theorem}
\label{TheoremVC} Let's consider the fractional partial differential
equation with a variable coefficient of the form \eqref{VC diffusion}. The
solution is given by the Wright-type transformation define by \eqref%
{Transformation PDE} of the analogous PDE of the form \eqref{PDE familly}
which admits a Green function. Furthermore, 
\begin{equation*}
\lim_{\beta \rightarrow 1^{-1}}u_{\beta }(x,t)=u(x,t).
\end{equation*}
\end{theorem}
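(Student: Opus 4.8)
The plan is to verify directly that the Wright-type transformation \eqref{Transformation PDE} of a solution $u(x,s)$ of the standard PDE \eqref{PDE familly} satisfies the fractional PDE \eqref{VC diffusion}, and then to establish the stated limit separately. The crucial structural observation is that the spatial operator $\sigma(x)\partial_{xx}+\mu(x)\partial_x$ acts only on $x$ and therefore commutes with the integral $\int_0^\infty(\cdot)\,g_\beta(s;t)\,ds$, which integrates over the auxiliary time variable $s$.

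First I would fix $x$ and regard $s\mapsto u(x,s)$ as a single-variable function in $C^1([0,\infty))$. Applying \lemref{lem3} (part 2) in the time variable gives
\begin{equation*}
{}_C D_t^{\beta}u_\beta(x,t)=\int_0^\infty \partial_s u(x,s)\,g_\beta(s;t)\,ds,
\end{equation*}
provided $u_\beta(x,0)=u(x,0)$. Since $u$ solves \eqref{PDE familly}, I substitute $\partial_s u=\sigma(x)u_{xx}+\mu(x)u_x$ into the integrand and pull the $x$-dependent coefficients and derivatives outside the $s$-integral:
\begin{equation*}
\int_0^\infty\big[\sigma(x)u_{xx}(x,s)+\mu(x)u_x(x,s)\big]g_\beta(s;t)\,ds=\sigma(x)\,\partial_{xx}u_\beta(x,t)+\mu(x)\,\partial_x u_\beta(x,t),
\end{equation*}
which is exactly the right-hand side of \eqref{VC diffusion}. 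The initial condition is handled by noting that $g_\beta(\cdot;t)$ is a probability density whose moments $\Gamma(k+1)t^{k\beta}/\Gamma(k\beta+1)$ vanish as $t\to0^+$, so the random time $\mathcal{T}_\beta(t)$ concentrates at $0$ and $u_\beta(x,0)=u(x,0)$, matching the hypothesis of \lemref{lem3}.

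For the limit, I would show that $g_\beta(\cdot;t)\to\delta(\cdot-t)$ as $\beta\to1^-$. By \eqref{eq:lw} the Laplace transform of $g_\beta(\cdot;t)$ is $E_\beta(-st^\beta)$, which tends to $E_1(-st)=e^{-st}$, the Laplace transform of the Dirac mass at $t$; hence $u_\beta(x,t)=\int_0^\infty u(x,s)g_\beta(s;t)\,ds\to u(x,t)$. Equivalently, using the Taylor representation \eqref{eq:taylor} in $s$ one has $u_\beta(x,t)=\sum_{n\ge0}\partial_s^n u(x,0)\,t^{\beta n}/\Gamma(n\beta+1)$, and termwise passage $\beta\to1^-$ replaces $t^{\beta n}/\Gamma(n\beta+1)$ by $t^n/n!$, recovering the Taylor series of $u(x,t)$.

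The main obstacle is analytic rather than structural: every formal step presupposes enough regularity and decay of $u$ to differentiate under the integral sign (justifying the commutation of $\partial_x,\partial_{xx}$ with the $s$-integral) and to pass to the limit $\beta\to1^-$ inside the integral. I would secure these by assuming that $u$ and its $x$-derivatives are bounded and integrable against $g_\beta(\cdot;t)$, uniformly on compact $t$-sets, which is guaranteed once the standard PDE \eqref{PDE familly} admits a sufficiently smooth Green function as hypothesized, and then invoke dominated convergence to promote the weak convergence $g_\beta\to\delta_t$ to convergence of $u_\beta$. The termwise Taylor argument similarly requires $u(x,\cdot)$ to be real-analytic in time with controlled growth of the coefficients $\partial_s^n u(x,0)$.
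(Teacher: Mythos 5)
Your proposal is correct and follows essentially the same route as the paper: apply Lemma~\ref{lem3} (part 2) in the time variable, commute the spatial operator $\sigma(x)\partial_{x}^{2}+\mu(x)\partial_{x}$ with the integral against $g_{\beta}(\cdot\,;t)$, and obtain the limit by showing $g_{\beta}(\cdot\,;t)\rightarrow\delta(\cdot-t)$ as $\beta\rightarrow1^{-}$. The only cosmetic difference is that you justify the delta convergence via the Laplace transforms $E_{\beta}(-st^{\beta})\rightarrow e^{-st}$ (plus a termwise Taylor alternative), whereas the paper invokes the known limit $W_{-v,1-v}(-z)\rightarrow\delta(z-1)$ of the Wright function directly; your explicit attention to the regularity needed to differentiate under the integral sign and to pass to the limit is a welcome sharpening of what the paper leaves implicit.
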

\begin{proof}
Taking the $\beta-$ Wright type transformation on both sides of %
\eqref{VC diffusion}, we get 
\begin{align*}
\int_{0}^{\infty} \dfrac{\partial u(x,s)}{\partial s} g_{\beta}(s;t) ds &=
\sigma(x)\int_{0}^{\infty} u_{xx}(x,s)g_{\beta}(s;t) ds \\
&+ \mu (x)\int_{0}^{\infty }u_{x}(x,s)g_{\beta}(s;t) ds.
\end{align*}
By Lemma \ref{lem3}, we obtain 
\begin{align*}
_{C}D_{t}^{\beta }u_{\beta }(x,t)& =\sigma (x)\dfrac{\partial ^{2}}{\partial
x^{2}}\int_{0}^{\infty }u(x,s)g_{\beta }(s;t)ds \\
& +\mu (x)\dfrac{\partial }{\partial x}\int_{0}^{\infty }u(x,s)g_{\beta
}(s;t)ds.
\end{align*}
Finally, we obtain 
\begin{equation*}
_{C}D_{t}^{\beta }u_{\beta }(x,t) = \sigma(x)\dfrac{\partial^{2}u_{%
\beta}(x,t)}{\partial x^{2}} + \mu(x)\dfrac{\partial u_{\beta}(x,t)}{%
\partial x}.
\end{equation*}
\bigskip For $0<v<1$, we recall that%
\begin{equation*}
\lim_{v\rightarrow 1^{-}}W_{-v,1-v}(-z)=\delta (x-1),
\end{equation*}
therefore, we get 
\begin{equation*}
\lim_{v\rightarrow 1^{-}}g_{v }(s;t)=\lim_{v\rightarrow 1^{-}}t^{-v
}W_{-v,1-v}(-st^{-v})=\frac{1}{t}\delta (st^{-1}-1)=\delta (s-t)
\end{equation*}
Therefore, if we assume $u(x,t)=\int_{0}^{\infty }G(x,y,t)f(y)dy,$ $u$ has a
Green function, using Fubini-Tonelli's theorem and assuming $f$ is a
suitable function, then 
\begin{eqnarray*}
\lim_{\beta \rightarrow 1^{-}}u_{\beta }(x,t) &=&\lim_{\beta \rightarrow
1^{-}}\int_{0}^{\infty }g_{\beta }(s;t)u(x,s)ds=\lim_{\beta \rightarrow
1^{-}}\int_{0}^{\infty }g_{\beta }(s;t)\int_{0}^{\infty }G(x,y,s)f(y)dyds \\
&=&\int_{0}^{\infty }\delta (s-t)G(x,y,s)f(y)dy=\int_{0}^{\infty
}G(x,y,t)f(y)dy.
\end{eqnarray*}
\end{proof}
Next, we will present the following families, exhibiting explicit solutions
thanks to the Theorem above. 
\subsection{Solutions for a fractional Fokker-Planck equation with a forcing
function.}
Let $\mathcal{T}_{\beta}(t)$ for all $t>0$ be a random time process  that is an inverse of a $\beta$ stable subordinator. Let it also be independent of a standard Brownian motion $W(t)$. The Langevin stochastic differential equation with continuous time change using $\mathcal{T}_{\beta}(t)$ is given by 
\begin{equation}
dX(t)=-f(X(t))d\mathcal{T}_{\beta}(t)+\sqrt{2D}dW(\mathcal{T}_{\beta}(t))  \label{SDE}
\end{equation}%
such that $X(0)=x_{0}$ and $f$ is a Lipschitz function, \cite{Hahn2012}.
The solution process $X(t)$ of the time-changed Langevin equation \eqref{SDE} is a
non-stationary stochastic process and is completely specified by finding the
probability density $p(x,t|x_{0})\geq 0$, which satisfies the fractional Fokker-Planck
equation 
\begin{eqnarray*}
_{C}D_{t}^{\beta }p &=&D\frac{\partial ^{2}p}{\partial x^{2}}+%
\frac{\partial }{\partial x}\left[ f(x)p\right]  \\
p(x,0|x_{0}) &=&\delta (x-x_{0}).
\end{eqnarray*}
Applying the Theorem of this Section, we obtain the following Corollaries.
\begin{corollary}
If $f$ satisfies the Riccati equation of the form 
\begin{equation*}
2f^{\prime }(x)-f^{2}(x)+\beta ^{2}x^{2}-\gamma +\dfrac{16v^{2}-1}{x^{2}}=0
\end{equation*}%
and $\beta ,$ $\gamma $ and $v$ are constants, then the family of the
fractional Fokker-Planck equation 
\begin{equation}
_{C}D_{t}^{\beta }u(x,t)=\dfrac{\partial ^{2}u}{\partial x^{2}}+\dfrac{%
\partial }{\partial x}\left[ f(x)u(x,t)\right]   \label{FFPE1}
\end{equation}%
where $f(x)=-f(-x)$ ($f$ is an odd function) admits an explicit solution of
the form \eqref{Transformation PDE} where $u$ is given by%
\begin{equation}
u(x,t)=\int_{0}^{\infty }p(x,t|y)\varphi (y)dy,  \label{FP Green}
\end{equation}
$p$ is given by 
\begin{eqnarray*}
p(x,t|x_{0}) &=&F\left( \frac{x}{\sqrt{4\sinh ^{2}(\beta t)}}\right) \frac{%
e^{\gamma t/4}}{\left( 4\sinh ^{2}(\beta t)\right) ^{1/4}} \\
&&\times \exp \left[ -\left( \frac{\beta }{4}\coth (\beta t)x^{2}+\dfrac{1}{2%
}\int f(x)dx\right) +\frac{x_{0}^{2}\beta }{2(1-e^{2\beta t})}\right] ,
\end{eqnarray*}%
and 
\begin{equation*}
F(z)=\left\{ 
\begin{array}{cc}
z^{1/2}\left[ A_{1}I_{2v}(kz)+A_{2}I_{-2v}(kz)\right] , & \text{for}\enspace %
z>0 \\ 
\lvert z\rvert ^{1/2}\left[ B_{1}K_{2v}(k\lvert z\rvert
)+B_{2}I_{2v}(k\lvert z\rvert )\right] , & \text{for}\enspace x<0%
\end{array}%
\right. 
\end{equation*}%
where $k=\beta x_{0}$ and $A_{1},$ $A_{2},$ $B_{1}$ and $B_{2}$ are
arbitrary constants be determined by boundary and continuity conditions and
if $\alpha $ is a real number 
\begin{eqnarray*}
I_{\alpha }(z) &=&\sum_{m=0}^{\infty }\frac{\left( z/2\right) ^{2m+\alpha }}{%
m!\Gamma (m+\alpha +1)}, \\
K_{\alpha }(z) &=&\frac{\pi }{2}\frac{I_{-\alpha }(z)-I_{\alpha }(z)}{\sin
(\alpha \pi )}.
\end{eqnarray*}
\begin{proof}
This Corollary is a direct consequence of the Theorem of this section and
the similarity solution presented in \cite{Bluman} using the method of Lie
group symmetries for the equation 
\begin{align}
\dfrac{\partial p}{\partial t} &= \dfrac{\partial^{2}p}{\partial x^{2}}+%
\dfrac{\partial}{\partial x}\left[f(x)p\right] \\
p(x,0|x_{0}) &= \phi(x-x_{0})
\end{align}
when $f$ is odd.
\end{proof}
\end{corollary}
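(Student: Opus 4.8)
The plan is to recognize the fractional Fokker--Planck equation \eqref{FFPE1} as the equation obtained from a classical (first order in time) Fokker--Planck equation by replacing $\partial_t$ with the Caputo operator $_{C}D_t^{\beta}$, and then to invoke Theorem~\ref{TheoremVC}. The spatial operator here is $\mathcal{L}_x u := u_{xx} + \partial_x[f(x)u]$, which on expanding reads $u_{xx} + f(x)u_x + f'(x)u$; strictly this carries a zeroth-order term $f'(x)u$ not present in the template \eqref{VC diffusion}, so I would first observe that the argument in the proof of Theorem~\ref{TheoremVC}, namely commuting the $x$-differential operator past the $s$-integration in the Wright transform \eqref{Transformation PDE}, applies verbatim to any linear spatial operator, including $\mathcal{L}_x$. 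Hence it suffices to solve the classical problem $\partial_t u = \mathcal{L}_x u$ and then form $u_\beta(x,t)=\int_0^\infty u(x,s)g_\beta(s;t)\,ds$.

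The heart of the proof is to produce the explicit fundamental solution $p(x,t\mid x_0)$ of the classical Fokker--Planck equation $\partial_t p = p_{xx}+\partial_x[f(x)p]$. Here I would follow the Lie group symmetry method of \cite{Bluman}: one computes the infinitesimal point symmetries admitted by this equation and seeks a group-invariant (similarity) solution. The admissibility of the particular symmetry producing a Gaussian-times-profile ansatz imposes a compatibility condition on the drift $f$, and I expect this condition to be exactly the Riccati equation $2f'(x)-f^2(x)+\beta^2x^2-\gamma+(16v^2-1)/x^2=0$ stated in the hypothesis. Under this condition the similarity reduction collapses the PDE to an ordinary differential equation of Bessel type in the similarity variable $z = x/\sqrt{4\sinh^2(\beta t)}$; its general solution is the combination of modified Bessel functions $I_{\pm 2v}$ and $K_{2v}$ recorded in the definition of $F(z)$, with the $\coth(\beta t)$ Gaussian factor and the $e^{\gamma t/4}$ normalization emerging from the symmetry variables. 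The oddness $f(x)=-f(-x)$ is what lets one patch the $z>0$ and $x<0$ branches of $F$ consistently and fixes the continuity and boundary constants $A_1,A_2,B_1,B_2$.

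Finally, since the classical equation is linear, superposition over the point source gives the solution with general initial profile $\varphi$ as $u(x,t)=\int_0^\infty p(x,t\mid y)\varphi(y)\,dy$, which is \eqref{FP Green}; applying the Wright transform \eqref{Transformation PDE} then yields the claimed solution $u_\beta$ of \eqref{FFPE1}. The main obstacle is not the fractional machinery, which is handled cleanly by Theorem~\ref{TheoremVC}, but the Lie-symmetry computation in the classical case: verifying that the Riccati constraint is precisely the integrability condition for the desired invariant solution, and carrying the reduction through to the explicit Bessel-function representation of $F$. I would lean on the similarity solution already worked out in \cite{Bluman} rather than redo that calculation from scratch.
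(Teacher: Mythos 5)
Your proposal follows the same route as the paper, whose entire proof is to cite Theorem~\ref{TheoremVC} together with the similarity (Lie-symmetry) solution of the classical Fokker--Planck equation from \cite{Bluman}. Your extra observation that the spatial operator $u_{xx}+\partial_x[f(x)u]$ carries a zeroth-order term $f'(x)u$ not literally covered by the template \eqref{VC diffusion}, and that the Wright-transform argument nevertheless commutes with any linear spatial operator, is a point the paper glosses over but is correctly handled in your write-up.
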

\begin{corollary}
A particular case of interest is obtained if 
\begin{equation*}
f(x)=a x+\frac{b}{x}, \text{ such that }a >0\text{ and }-\infty <b <1.
\end{equation*}
The transition probability density is 
\begin{eqnarray*}
p(x,t|x_{0}) &=& a x_{0}^{1/2}\left( \frac{x}{x_{0}}\right) ^{-\frac{b }{2}%
}z^{\frac{1}{2}}I_{-\left( \frac{1}{2}+\frac{b }{2}\right) }(kz) \\
&&\times \frac{e^{\gamma t/4}}{\left( 4\sinh ^{2}(a t)\right) ^{1/4}}\exp
\left( -\left( \frac{a }{4}\coth (a t)\right) ^{2}-\dfrac{a x^{2}}{4}\right)
\end{eqnarray*}
for $x\geq 0$.
\end{corollary}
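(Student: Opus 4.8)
The plan is to obtain this transition density as a direct specialization of the preceding Corollary. First I would verify that the explicit drift $f(x) = ax + b/x$ satisfies the governing Riccati equation for a suitable identification of the constants $\beta,\gamma,v$, and then substitute those values, together with $\int f(x)\,dx$, into the general closed form for $p(x,t|x_0)$. Because the previous Corollary already produces the full kernel once its Riccati hypothesis holds, no new differential equation needs to be solved; the work is a specialization plus a normalization.

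First I would check the Riccati equation. Writing $f(x) = ax + b/x$ gives $f'(x) = a - b/x^2$ and $f^2(x) = a^2 x^2 + 2ab + b^2/x^2$, so
\begin{equation*}
2f'(x) - f^2(x) = -a^2 x^2 + 2a(1-b) - \frac{b^2 + 2b}{x^2}.
\end{equation*}
Matching this term by term (coefficients of $x^2$, of $1$, and of $x^{-2}$) against the required identity $2f'(x) - f^2(x) = -\beta^2 x^2 + \gamma - \frac{16v^2 - 1}{x^2}$ forces $\beta = a$ (using $a>0$), $\gamma = 2a(1-b)$, and $16v^2 = (1+b)^2$, i.e. $2v = \pm\frac{1+b}{2}$. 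This confirms that the hypothesis of the previous Corollary applies and that its formula for $p$ is valid here.

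Next I would substitute. Since $\int f(x)\,dx = \tfrac{a x^2}{2} + b\ln x$, the factor $\exp\bigl(-\tfrac12\int f\bigr)$ produces exactly the $x^{-b/2}$ and $\exp(-ax^2/4)$ contributions, while $\beta = a$ turns $\frac{e^{\gamma t/4}}{(4\sinh^2(\beta t))^{1/4}}$ and the $\coth(\beta t)$ term into their stated $a$-dependent forms. Choosing the branch $2v = \frac{1+b}{2}$ and discarding the $I_{2v}$ component (setting $A_1 = 0$) leaves the single modified Bessel function $I_{-(1/2+b/2)}(kz)$ with $k = \beta x_0 = a x_0$, as displayed; the range $-\infty < b < 1$ is precisely what makes the index $-(1+b)/2 > -1$, so that $z^{1/2}I_{-(1+b)/2}(kz) \sim z^{-b/2}$ is the admissible solution near the origin and the boundary/continuity conditions select this term uniquely.

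The remaining step, which I expect to be the main obstacle, is fixing the multiplicative constant $A_2$ and the accompanying $x_0$-dependent factors (the $a\,x_0^{1/2}$ prefactor and the $x_0^{b/2}$ absorbed into $(x/x_0)^{-b/2}$). These are determined by enforcing the initial condition $p(x,0|x_0) = \delta(x-x_0)$, equivalently the normalization $\int_0^\infty p(x,t|x_0)\,dx = 1$; carrying this out requires the Hankel-type integral identities for products of a Gaussian with a modified Bessel function, and this is where the careful bookkeeping lies. Once the constant is pinned down, collecting all factors reproduces the claimed expression for $x \geq 0$.
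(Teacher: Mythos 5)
Your proposal is correct and follows the route the paper intends: the paper states this corollary without any proof, as an immediate specialization of the preceding corollary, and your explicit Riccati matching ($\beta=a$, $\gamma=2a(1-b)$, $16v^{2}=(1+b)^{2}$, hence Bessel index $-(1+b)/2$) together with the substitution $\int f(x)\,dx=\tfrac{a x^{2}}{2}+b\ln x$ is exactly that specialization, carried out in more detail than the paper provides. The one outstanding item is the determination of the overall constant from the $\delta(x-x_{0})$ initial condition, which you correctly flag and which the paper likewise leaves implicit.
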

\begin{example}
Let's consider the standard fractional Fokker-Planck equation of the form 
\begin{align}
_{C}D_{t}^{\beta }u(x,t)& =\frac{\partial ^{2}u}{\partial x^{2}}+x\frac{%
\partial u}{\partial x}+u \\
u(x,0)& =f(x).
\end{align}%
By the Theorem of this Section, we obtain \eqref{Transformation PDE} where $%
p(x,t)$ satisfies the classical Fokker-Planck equation (FPE) 
\begin{equation*}
\dfrac{\partial p}{\partial t}=\dfrac{\partial ^{2}p}{\partial x^{2}}+x%
\dfrac{\partial p}{\partial x}+p.
\end{equation*}%
It is possible to find the Green function of FPE, see for example \cite%
{Suazo2011}. Therefore we obtain explicit expression for the solution 
\begin{equation*}
u(x,t)=\int_{0}^{\infty }\int_{0}^{\infty }\frac{\exp \left[ -\frac{\left(
x-e^{-s}y\right) ^{2}}{2\left( 1-e^{-2s}\right) }\right] }{\sqrt{2\pi \left(
1-e^{-2s}\right) }}g_{\beta }(s;t)dsf(y)dy.
\end{equation*}
\end{example}
\begin{corollary}
The family of fractional PDEs with space variable coefficients of the form 
\begin{align}
_{C}D_{t}^{\beta }u(x,t)& =xu_{xx}(x,t)+f(x)u_{x}(x,t)  \label{Family 1} \\
u(x,0)& =\varphi (x)
\end{align}%
where $f$ satisfies the Riccati equation of the form 
\begin{equation*}
xf^{\prime }-f+\frac{1}{2}f^{2}=Ax^{\frac{3}{2}}+Cx-\frac{3}{8}
\end{equation*}%
admits an explicit solution of the form \eqref{Transformation PDE} where $u$
is given by \eqref{FP Green} and $p(x,y,t)$ is the inverse Laplace transform
of 
\begin{align*}
U_{\lambda }(x,t)& =\sqrt{\frac{\sqrt{x}\left( 1+\lambda t\right) }{\sqrt{x}%
\left( 1+\lambda t\right) -A\lambda \frac{t^{3}}{12}}}\exp [S(x,y,t)] \\
& \times \exp \left[ -\frac{1}{2}\left( F(x)-F\left( \frac{(12(1+\lambda t)%
\sqrt{x}-A\lambda t^{3})^{2}}{144\left( 1+\lambda t\right) ^{4}}\right)
\right) \right] 
\end{align*}%
where $F^{\prime }(x)=\frac{f(x)}{x}$, 
\begin{equation*}
S(\lambda ,x,t)=-\dfrac{\lambda (x+Ct^{2}/2)}{1+\lambda t}-\dfrac{2At^{2}%
\sqrt{x}(3+\lambda t)}{3(1+\lambda t)^{2}}+\dfrac{A^{2}t^{4}(2\lambda
t(3+\lambda t/2)-3)}{108(1+\lambda t)^{3}}
\end{equation*}%
for $\lambda \geq 0.$
\end{corollary}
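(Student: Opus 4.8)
The plan is to separate the fractional-in-time structure, which the earlier results already dispatch, from the construction of a Green function for an associated classical degenerate-parabolic equation, which is where the Riccati hypothesis enters. First I would apply Theorem~\ref{TheoremVC} with $\sigma(x)=x$ and $\mu(x)=f(x)$. This reduces \eqref{Family 1} to the classical equation
\begin{equation*}
u_t(x,t)=x\,u_{xx}(x,t)+f(x)\,u_x(x,t),\qquad u(x,0)=\varphi(x),
\end{equation*}
in the sense that, once its solution $u$ is known, the solution of \eqref{Family 1} is the Wright-type transform \eqref{Transformation PDE}, with $\lim_{\beta\to1^-}u_\beta=u$. All of the fractional content is thereby offloaded onto Lemma~\ref{lem3} and Theorem~\ref{TheoremVC}, and it remains only to exhibit a Green function $p(x,y,t)$ so that $u$ has the representation \eqref{FP Green}.

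Next I would build $p(x,y,t)$ through a Laplace transform in the source variable. Setting $U_\lambda(x,t)=\int_0^\infty p(x,y,t)\,e^{-\lambda y}\,dy$ and using that $y$ enters only through the initial datum $p(x,0|y)=\delta(x-y)$, the function $U_\lambda$ solves the same classical equation in $(x,t)$ but with the elementary exponential initial condition $U_\lambda(x,0)=e^{-\lambda x}$. Inverting this transform in $\lambda\leftrightarrow y$ at the end returns $p(x,y,t)$, so the task collapses to solving a single exponential-data initial-value problem in closed form.

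To solve that problem I would use the Lie-point-symmetry and similarity method of the preceding corollaries, following \cite{Bluman}. The operator $x\partial_x^2+f(x)\partial_x$ is degenerate, since the coefficient of $u_{xx}$ vanishes at $x=0$, which is why the domain is $x\ge0$; a symmetrizing substitution built from $F$ with $F'(x)=f(x)/x$ removes the awkward drift and puts the equation into a form amenable to a self-similar ansatz. The closed-form solvability is precisely what the Riccati constraint
\begin{equation*}
x f'(x)-f(x)+\tfrac12 f^2(x)=A x^{3/2}+C x-\tfrac38
\end{equation*}
buys: under the standard Riccati-to-linear reduction the nonlinear condition on $f$ collapses to a constant-coefficient relation for the transformed profile, which is what generates the exponent $S(\lambda,x,t)$ and the algebraic prefactor $\sqrt{\sqrt{x}(1+\lambda t)/(\sqrt{x}(1+\lambda t)-A\lambda t^3/12)}$ in $U_\lambda(x,t)$.

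The main obstacle is this last step. One must verify that the right-hand side $A x^{3/2}+Cx-\tfrac38$ is exactly the combination of terms produced by the similarity ansatz, that is, that the Riccati hypothesis is the genuine integrability condition and not merely sufficient, and then confirm by direct substitution that the intricate $U_\lambda(x,t)$ solves the transformed equation with the correct exponential initial value. Checking that the Laplace inversion in $\lambda$ is legitimate and yields a bona fide Green function, and handling the degeneracy at $x=0$ together with the boundary behaviour that pins down the otherwise free constants, is the delicate part; by comparison, the reduction from the fractional to the classical problem in the first step is immediate once Theorem~\ref{TheoremVC} is available.
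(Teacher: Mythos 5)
Your proposal matches the paper's own proof in its essential structure: the paper likewise reduces \eqref{Family 1} to the classical equation $u_t=xu_{xx}+f(x)u_x$ via Theorem~\ref{TheoremVC} and then takes the Green function from the similarity-solution machinery, except that where you sketch the Laplace-transform-in-the-source-variable and Lie-symmetry derivation of $U_\lambda$, the paper simply cites Theorem~6.1 of \cite{Craddock} as a black box. The verifications you flag as the delicate part are exactly what that citation outsources, so your route is the same one with the reference unpacked.
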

\begin{proof}
This Corollary is a direct consequence of the Theorem of this section and
the similarity solution presented in Theorem 6.1 on \cite{Craddock} for 
\begin{align}
\dfrac{\partial u}{\partial t} &= x\dfrac{\partial^{2}u}{\partial x^{2}} +
f(x) \dfrac{\partial u}{\partial x} \\
u(x,0) &= \varphi(x)
\end{align}
when $f$ is an odd function.
\end{proof}
\begin{corollary}
The family of fractional PDEs with space variable coefficients of the form 
\begin{align}
_{C}D_{t}^{\beta }u(x,t)& =xu_{xx}(x,t)+f(x)u_{x}(x,t) \\
u(x,0)& =\varphi (x)
\end{align}%
where $f$ satisfies the Riccati equation of the form 
\begin{equation*}
xf^{\prime }-f+\dfrac{1}{2}f^{2}=Ax+B
\end{equation*}%
admits an explicit solution of the form \eqref{Transformation PDE}  where $u$
is given by \eqref{FP Green} and $p(x,y.t)$ is the inverse Laplace transform
of 
\begin{equation*}
U_{\lambda }(x,t)=\exp \left[ -\dfrac{\lambda (x+At^{2}/2)}{1+\lambda t}-%
\dfrac{1}{2}\left( F(x)-F\left( \dfrac{x}{\left( 1+\lambda t\right) ^{2}}%
\right) \right) \right] 
\end{equation*}%
where $F^{\prime }(x)=\frac{f(x)}{x}$ and for $\lambda \geq 0$.
\end{corollary}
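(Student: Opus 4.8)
The plan is to strip off the fractional time derivative with Theorem~\ref{TheoremVC} and then import an explicit fundamental solution for the resulting classical degenerate diffusion from the Lie-symmetry analysis of Craddock. First I would apply Theorem~\ref{TheoremVC} with $\sigma(x)=x$ and $\mu(x)=f(x)$; since these coefficients do not depend on $t$ the hypotheses hold, and the theorem gives at once that the solution of the fractional problem is the Wright-type transformation \eqref{Transformation PDE}, $u_\beta(x,t)=\int_0^\infty u(x,s)\,g_\beta(s;t)\,ds$, of the solution $u$ of the classical PDE $u_t=xu_{xx}+f(x)u_x$ carrying the same initial datum $\varphi$. This reduces the problem to producing a Green function $p(x,t|y)$ for the classical equation, after which \eqref{FP Green} assembles $u$ and hence $u_\beta$.

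Next I would invoke Theorem~6.1 of \cite{Craddock} for the operator $x\partial_x^2+f(x)\partial_x$. The crucial point is that the Riccati condition $xf'-f+\frac{1}{2}f^2=Ax+B$ is precisely the solvability criterion under which this equation admits a nontrivial one-parameter Lie point symmetry: the logarithmic substitution $f=2x(\log w)'$ linearizes the Riccati equation into the second-order linear ODE $w''=(\frac{A}{2x}+\frac{B}{2x^2})\,w$, whose solutions (of Whittaker/Bessel type) are what make the symmetry flow explicit. Applying this symmetry to the stationary seed solution $u\equiv 1$ produces a one-parameter family $U_\lambda(x,t)$ that is the Laplace transform in the second space variable of the fundamental solution, $U_\lambda(x,t)=\int_0^\infty e^{-\lambda y}p(x,t|y)\,dy$. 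I would then read off its closed form: the linear right-hand side $Ax+B$ is what forces the rational contraction $x\mapsto x/(1+\lambda t)^2$ inside $F$, the prefactor $\exp[-\lambda(x+At^2/2)/(1+\lambda t)]$, and the potential term $-\frac{1}{2}(F(x)-F(x/(1+\lambda t)^2))$ with $F'(x)=f(x)/x$, matching the displayed $U_\lambda$.

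Finally I would recover the transition density by inverting in the spectral parameter, $p(x,t|y)=\mathcal{L}^{-1}(U_\lambda)$, insert it into \eqref{FP Green}, and compose with the Wright transform of the first step to obtain the stated solution of the fractional equation.

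The hard part will be the middle step: confirming that $Ax+B$ is exactly the case of Craddock's classification giving this $U_\lambda$, and that the symmetry acting on the seed solution reproduces each factor verbatim, in particular the contraction $x/(1+\lambda t)^2$ and the prefactor involving $At^2/2$. Because the prolongation computation and the Laplace inversion are carried out in \cite{Craddock}, I would cite them rather than redo the symmetry algebra; the residual obligations are to check that our hypotheses coincide with Craddock's, that $F(x)=\int f(x)/x\,dx$ is the right potential, and that the inverse Laplace transform in $\lambda$ is well defined for $\lambda\ge 0$ so that $p$ is a genuine density.
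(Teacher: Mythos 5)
Your proposal follows essentially the same route as the paper: reduce the fractional equation to the classical degenerate diffusion $u_t = xu_{xx}+f(x)u_x$ via Theorem~\ref{TheoremVC}, then import the explicit symmetry solution $U_\lambda$ and its Laplace inversion from Craddock, so the argument is correct and matches the paper's (your added detail on the Riccati linearization and the seed solution is a faithful elaboration of what the citation carries). The only discrepancy is bibliographic: for the Riccati right-hand side $Ax+B$ the paper invokes Theorem~4.1 of \cite{Craddock}, reserving Theorem~6.1 for the $Ax^{3/2}+Cx-\tfrac{3}{8}$ case of the preceding corollary.
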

\begin{proof}
This Corollary is a direct consequence of the Theorem of this section and
the similarity solution presented in Theorem 4.1 on \cite{Craddock} for 
\begin{align}
\dfrac{\partial u}{\partial t} &= x\dfrac{\partial^{2}u}{\partial x^{2}} +
f(x) \dfrac{\partial u}{\partial x} \\
u(x,0) &= \varphi(x)
\end{align}
when $f$ is odd.
\end{proof}
\begin{example}
\begin{align}
_{C}D_{t}^{\beta }u(x,t)& =xu_{xx}(x,t)+\left( \dfrac{1+3\sqrt{x}}{2\left( 1+%
\sqrt{x}\right) }\right) u_{x}(x,t) \\
u(x,0)& =\varphi (x)
\end{align}%
has a solution of the form \eqref{Transformation PDE} where $%
u(x,t)=\int_{0}^{\infty }G(x,y,t)\varphi (y)dy$ and 
\begin{equation*}
G(x,y,t)=\dfrac{\cos \left( \dfrac{2\sqrt{xy}}{t}\right) }{\sqrt{\pi yt}%
\left( 1+\sqrt{x}\right) }\left( 1+\sqrt{y}\tanh \left( \dfrac{2\sqrt{xy}}{t}%
\right) \right) \exp \left[ -\dfrac{x+y}{t}\right] .
\end{equation*}
\end{example}
\begin{example}
\begin{align}
_{C}D_{t}^{\beta }u(x,t)& =xu_{xx}(x,t)+\left( \dfrac{1}{2}+\sqrt{x}\coth
\left( \sqrt{x}\right) \right) u_{x}(x,t) \\
u(x,0)& =\varphi (x)
\end{align}%
has a solution of the form \eqref{Transformation PDE}  where $%
u(x,t)=\int_{0}^{\infty }G(x,y,t)\varphi (y)dy$ and 
\begin{equation*}
G(x,y,t)=\dfrac{\sinh \left( \dfrac{2\sqrt{xy}}{t}\right) }{\sqrt{\pi yt}}%
\dfrac{\sinh \left( \sqrt{y}\right) }{\sinh \left( \sqrt{x}\right) }\exp %
\left[ -\dfrac{x+y}{t}-\dfrac{1}{4}t\right] .
\end{equation*}
\end{example}
\begin{theorem}
A solution for 
\begin{equation}  \label{compbetaalpha}
_{C}D_{t}^{\beta\alpha}u_{\beta\alpha}(x,t) = \sigma(x)\dfrac{\partial^{2}
u_{\beta\alpha}(x,t)}{\partial x^{2}} + \mu(x)\dfrac{\partial u_{\beta
\alpha}(x,t)}{\partial x}
\end{equation}
can be obtained as an $\alpha -$Wright type transformation of $u_{\beta}$,
where $u_{\beta}$ is a solution for 
\begin{equation*}
_{C}D_{t}^{\beta}u_{\beta }(x,t) = \sigma(x)\dfrac{\partial^{2}
u_{\beta}(x,t)}{\partial x^{2}} + \mu(x) \dfrac{\partial u_{\beta}(x,t)}{%
\partial x}.
\end{equation*}
\end{theorem}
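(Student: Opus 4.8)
The plan is to realise the putative solution $u_{\beta\alpha}$ as the $\alpha$-Wright type transformation of $u_\beta$ in the time variable,
\[
u_{\beta\alpha}(x,t) := \int_0^\infty u_\beta(x,s)\, g_\alpha(s;t)\,ds,
\]
and then to show it satisfies \eqref{compbetaalpha} by reducing to Theorem \ref{TheoremVC} through the semigroup relation \eqref{eq:CK}. First I would invoke Theorem \ref{TheoremVC} for the solution $u_\beta$ itself: it is the $\beta$-Wright transformation \eqref{Transformation PDE} of a classical solution $u$ of \eqref{PDE familly}, so that $u_\beta(x,s)=\int_0^\infty u(x,r)\, g_\beta(r;s)\,dr$, with $x$ held as a parameter throughout.

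Substituting this representation into the definition of $u_{\beta\alpha}$ and interchanging the order of integration by Fubini--Tonelli, the relation \eqref{eq:CK} collapses the two Wright kernels into a single one:
\[
u_{\beta\alpha}(x,t) = \int_0^\infty u(x,r)\left(\int_0^\infty g_\beta(r;s)\, g_\alpha(s;t)\,ds\right)dr = \int_0^\infty u(x,r)\, g_{\beta\alpha}(r;t)\,dr.
\]
Hence $u_{\beta\alpha}$ is precisely the $\beta\alpha$-Wright transformation of the same classical solution $u$. Since $\beta,\alpha\in(0,1)$ forces $\beta\alpha\in(0,1)$, the claim then follows by applying Theorem \ref{TheoremVC} verbatim with the single order $\beta\alpha$ in place of $\beta$: because $u$ solves \eqref{PDE familly}, its $\beta\alpha$-Wright transformation solves \eqref{compbetaalpha}.

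A more transparent, composition-flavoured route runs in parallel and I would include it as the conceptual content. Apply the $\alpha$-Wright transformation directly to both sides of the $\beta$-equation obeyed by $u_\beta$. On the right-hand side the operators $\sigma(x)\partial_{xx}$ and $\mu(x)\partial_x$ act only on $x$ and therefore pass through the $s$-integral, yielding $\sigma(x)\,\partial_{xx}u_{\beta\alpha}+\mu(x)\,\partial_x u_{\beta\alpha}$. On the left-hand side, the composition law of Lemma \ref{lem3}(4), read in the time variable with $x$ a parameter, gives
\[
\int_0^\infty {}_C D_s^\beta u_\beta(x,s)\, g_\alpha(s;t)\,ds = {}_C D_t^\alpha\!\left({}_C D^\beta u_\beta\right)(x,t) = {}_C D_t^{\beta\alpha} u_{\beta\alpha}(x,t),
\]
which is exactly the left-hand side of \eqref{compbetaalpha}. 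This displays the $\beta\alpha$-fractional dynamics as the $\alpha$-time-change of the $\beta$-fractional dynamics.

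The semigroup collapse via \eqref{eq:CK} is immediate; the genuine work is analytic. The main obstacle will be justifying the two interchanges: Fubini--Tonelli for the double Wright integral, and differentiation under the integral sign to move $\sigma(x)\partial_{xx}$ and $\mu(x)\partial_x$ inside the $s$-integral. Both require integrability and decay control of $u$ and its spatial derivatives against the densities $g_\beta$ and $g_\alpha$. One also needs the matching of initial data $u_{\beta\alpha}(x,0)=u_\beta(x,0)$, inherited from $u(x,0)$, so that the hypothesis $f_{\beta\alpha}(0)=f_\beta(0)$ of Lemma \ref{lem3}(4) is met. Under the same suitability assumptions on $u$ already used in Theorem \ref{TheoremVC}, these interchanges are valid and the proof closes.
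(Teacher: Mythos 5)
Your first route is essentially the paper's own proof: the paper likewise rewrites the $\beta$-equation via Lemma~\ref{lem3} as integrals of the classical solution $u$ against $g_\beta$, applies the $\alpha$-Wright transformation, and collapses the two kernels into $g_{\beta\alpha}$ using \eqref{eq:CK}. Your second, composition-flavoured route via Lemma~\ref{lem3}(4) and your remarks on Fubini--Tonelli and initial-data matching are correct refinements of the same idea, not a genuinely different argument.
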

\begin{proof}
Let's consider 
\begin{equation*}
_{C}D_{t}^{\beta }u_{\beta}(x,t) = \sigma(x)\dfrac{\partial^{2}u_{\beta
}(x,t)}{\partial x^{2}} + \mu(x)\dfrac{\partial u_{\beta}(x,t)}{\partial x}.
\end{equation*}
By Lemma \ref{lem3}, it can be written as 
\begin{align*}
\int_{0}^{\infty}\dfrac{\partial u(x,w)}{\partial w}g_{\beta }(w;s)dw &=
\sigma(x)\dfrac{\partial^{2}}{\partial x^{2}}\int_{0}^{\infty}
u(x,w)g_{\beta}(w;s)dw \\
&+ \mu(x)\dfrac{\partial}{\partial x}\int_{0}^{\infty}
u(x,w)g_{\beta}(w;s)dw.
\end{align*}
Taking a $\alpha $-Wright type transformation on both sides of the equation
we obtain, 
\begin{align*}
\int_{0}^{\infty}\int_{0}^{\infty}\dfrac{\partial u(x,w)}{\partial w}%
g_{\beta }(w;s)dw g_{\alpha }(s;t)ds &= \int_{0}^{\infty} \sigma(x) \dfrac{%
\partial^{2}}{\partial x^{2}} \int_{0}^{\infty} u(x,w)g_{\beta}(w;s)dw
g_{\alpha}(s;t)ds \\
&+ \int_{0}^{\infty}\mu(x)\dfrac{\partial}{\partial x}\int_{0}^{%
\infty}u(x,w)g_{\beta }(w;s)dw g_{\alpha}(s;t)ds
\end{align*}
and using \eqref{eq:CK} we obtain 
\begin{align*}
\int_{0}^{\infty}\dfrac{\partial u(x,w)}{\partial w} g_{\beta \alpha}(w;t)dw
&= \sigma(x)\dfrac{\partial^{2}}{\partial x^{2}}\int_{0}^{\infty}u(x,w)
g_{\beta \alpha}(w;t)dw \\
&+ \mu(x)\dfrac{\partial}{\partial x}\int_{0}^{\infty}u(x,w)g_{\beta%
\alpha}(w;t)dw,
\end{align*}
which completes the proof.
\end{proof}
\section{Numerical Simulations}\label{numerical}
We use Monte Carlo integration to simulate the solutions of factional differential equations and partial differential equations given their solutions. As expected, simulations of heavy tail distributions require the use of a large amount of random numbers for adequate coverage. We first present the lemma by M. Kanter~\cite{kanter1975} to generate random numbers distributed as the L\'evy $\alpha$-stable distribution with stability index $0< \alpha \leq 2$, $L_{\alpha}^{\theta}(x)$, defined by equation \eqref{levy-alpha}. See also \cite{Cahoy2012}.
\begin{lemma}[Lemma 4.1~\cite{kanter1975}]
Let $\alpha\in(0,1)$ and let $L_{\alpha}^{-\alpha}(x)$ as defined in \eqref{levy-alpha}. Then for $x\geq0$
\begin{equation}
L_{\alpha}^{-\alpha}(x) = \dfrac{1}{\pi}\left(\dfrac{\alpha}{1-\alpha}\right)\left(\dfrac{1}{x}\right)^{(1-\alpha)^{-1}} \int_{0}^{\pi} a(\varphi)\exp\left(-\left(\dfrac{1}{x}\right)^{\alpha/(1-\alpha)}\right) d\varphi
\end{equation}
where
\begin{equation}\label{eq-a}
a(\varphi) = \left(\dfrac{\sin(\alpha\varphi)}{\sin(\varphi)}\right)^{(1-\alpha)^{-1}}\left(\dfrac{\sin((1-\alpha)\varphi)}{\sin(\alpha\varphi)}\right).
\end{equation}
\end{lemma}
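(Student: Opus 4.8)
The plan is to identify $L_{\alpha}^{-\alpha}$ as the one-sided, maximally skewed $\alpha$-stable density and to produce the stated angular integral by inverting its Laplace transform along a deformed Bromwich contour, following Zolotarev's method.

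First I would specialize the preliminaries to $\theta=-\alpha$. For $0<\alpha<1$ the density $L_{\alpha}^{-\alpha}$ is supported on $[0,\infty)$, and continuing the characteristic function in \eqref{eq:fw1} (with $x=1$) from the real axis to the ray $s=i\lambda$ gives $\psi_{\alpha}^{-\alpha}(i\lambda)=\lambda^{\alpha}$, since $(\lambda e^{i\pi/2})^{\alpha}e^{-i\alpha\pi/2}=\lambda^{\alpha}$. Hence
\begin{equation*}
\int_{0}^{\infty} e^{-\lambda y}L_{\alpha}^{-\alpha}(y)\,dy = e^{-\lambda^{\alpha}},\qquad \lambda>0,
\end{equation*}
which reduces the lemma to inverting the transform $e^{-\lambda^{\alpha}}$.

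Next I would write the Bromwich inversion integral
\begin{equation*}
L_{\alpha}^{-\alpha}(x) = \frac{1}{2\pi i}\int_{c-i\infty}^{c+i\infty} e^{\lambda x-\lambda^{\alpha}}\,d\lambda
\end{equation*}
and deform the vertical line onto a Hankel-type contour hugging the branch cut of $\lambda^{\alpha}$ along $(-\infty,0]$; the bound $\Re(-\lambda^{\alpha})\to-\infty$ away from the negative axis justifies the deformation. The self-similar substitution $\lambda=x^{-1/(1-\alpha)}u$ then pulls out the prefactor $x^{-1/(1-\alpha)}$, because $\lambda x-\lambda^{\alpha}=x^{-\alpha/(1-\alpha)}(u-u^{\alpha})$, leaving an integral that depends on $x$ only through $x^{-\alpha/(1-\alpha)}$.

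The decisive step is Zolotarev's angular substitution: parametrizing the steepest-descent path by $\varphi\in(0,\pi)$ collapses the exponent $u-u^{\alpha}$ to $-a(\varphi)$ with $a(\varphi)$ as in \eqref{eq-a}, while the Jacobian of this map together with the factor $1/(2\pi i)$ supplies the weight $\tfrac{\alpha}{\pi(1-\alpha)}\,a(\varphi)\,d\varphi$; combined with the scale prefactor this yields exactly the claimed representation. The main obstacle is precisely this change of variables: I must verify that $\varphi\mapsto\lambda(\varphi)$ is a monotone bijection onto the deformed contour and carry out the trigonometric bookkeeping that produces both the $a(\varphi)$ inside the exponential and the matching $a(\varphi)$ from the Jacobian, while the contour deformation, Fubini, and dominated-convergence steps are routine. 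As an independent confirmation one can follow Kanter's original probabilistic route: with $E$ a standard exponential and $V$ uniform on $(0,\pi)$ independent of $E$, show that $\bigl(a(V)/E\bigr)^{(1-\alpha)/\alpha}$ has Laplace transform $e^{-\lambda^{\alpha}}$ and read off its density to recover the stated formula.
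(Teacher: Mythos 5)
The paper offers no proof of this lemma at all: it is quoted verbatim from Kanter's 1975 paper (his Lemma~4.1) solely to justify the random-number generator used in Section~5, so there is no in-paper argument to compare yours against, and your proposal must be judged on its own. On its own terms it is correct in outline, and it is in substance the classical Ibragimov--Chernin/Zolotarev contour derivation on which Kanter's statement rests. The key assertions all check: with $\theta=-\alpha$, continuing \eqref{eq:fw1} to $s=i\lambda$ gives $\psi_{\alpha}^{-\alpha}(i\lambda)=\lambda^{\alpha}$, hence the one-sided Laplace transform $e^{-\lambda^{\alpha}}$; the scaling $\lambda=x^{-1/(1-\alpha)}u$ extracts the prefactor $x^{-1/(1-\alpha)}$ and leaves the exponent $x^{-\alpha/(1-\alpha)}(u-u^{\alpha})$; and on the curve $\Im(u-u^{\alpha})=0$, i.e. $u=r(\varphi)e^{i\varphi}$ with $r(\varphi)=\left(\sin(\alpha\varphi)/\sin\varphi\right)^{1/(1-\alpha)}$, one computes both $u-u^{\alpha}=-a(\varphi)$ and $\Im\bigl(u'(\varphi)\bigr)=\tfrac{\alpha}{1-\alpha}\,a(\varphi)$, so that folding $(-\pi,\pi)$ onto $(0,\pi)$ by conjugation symmetry produces exactly the weight $\tfrac{\alpha}{\pi(1-\alpha)}\,a(\varphi)\,d\varphi$ you announce. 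So the ``decisive step'' you flag does go through.

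Two corrections are in order. First, your justification of the contour deformation is not quite right: for $\alpha>1/2$ one has $\Re(-\lambda^{\alpha})\to+\infty$ in directions close to the negative real axis, so the decay that permits closing the contour there comes from the factor $e^{\lambda x}$ (linear decay in $|\lambda|$, which dominates the sublinear growth $|\lambda|^{\alpha}$), and this is valid only for $x>0$ --- which is, of course, where the formula is asserted. Second, and worth flagging explicitly: the formula you set out to prove is Kanter's correct statement, not the one printed in the paper. The paper's display contains a typo --- the factor $a(\varphi)$ has been dropped from the exponential, which should read $\exp\bigl(-(1/x)^{\alpha/(1-\alpha)}a(\varphi)\bigr)$. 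As printed, the exponential is constant in $\varphi$, the right-hand side cannot be a probability density, and the simulation corollary of Kanter quoted immediately after it would not follow. Finally, your proposed ``independent confirmation'' via the law of $\bigl(a(V)/E\bigr)^{(1-\alpha)/\alpha}$ is a consistency check rather than an independent proof: reading off its density from the uniform-exponential representation is easy, but showing that this random variable has Laplace transform $e^{-\lambda^{\alpha}}$ is the same analytic computation in disguise.
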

\begin{corollary}[Corollary 4.1~\cite{kanter1975}]
Let $U_1$ and $U_2$ be independent random variables where $U_1$ is uniformly distributed on $[0,1]$, and $U_2$ is uniformly distributed on $[0,\pi]$. Then for $\alpha\in(0,1)$, $L_{\alpha}^{-\alpha}(x)$ is the density of $(-a(U_2)/\log(U_1))^{(1-\alpha)/\alpha}$ where $a$ is given by equation \eqref{eq-a}.
\end{corollary}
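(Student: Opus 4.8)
The plan is to compute the density of the proposed random variable directly and match it, term by term, against the integral representation for $L_\alpha^{-\alpha}$ supplied by the preceding lemma (reading the exponent there as carrying the weight $a(\varphi)$, i.e. $\exp(-a(\varphi)(1/x)^{\alpha/(1-\alpha)})$, since without that factor the stated identity does not integrate to a density). First I would record the two elementary distributional facts that do all the work. Writing $W:=-\log U_1$, the variable $W$ is standard exponential with density $e^{-w}$ on $w>0$, while $U_2$ has density $\tfrac1\pi$ on $[0,\pi]$, and the two are independent. Moreover $a(\varphi)>0$ for every $\varphi\in(0,\pi)$, because $\sin\varphi$, $\sin(\alpha\varphi)$ and $\sin((1-\alpha)\varphi)$ are all strictly positive there for $\alpha\in(0,1)$. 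Hence
\[
Y:=\left(\frac{-a(U_2)}{\log U_1}\right)^{(1-\alpha)/\alpha}=\left(\frac{a(U_2)}{W}\right)^{(1-\alpha)/\alpha}
\]
is a well-defined, strictly positive random variable, and it suffices to show that its density equals $L_\alpha^{-\alpha}$.

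Next I would condition on $U_2=\varphi$ and transport the law of $W$ through the map $w\mapsto y=(a(\varphi)/w)^{(1-\alpha)/\alpha}$. This map is strictly decreasing with inverse $w=a(\varphi)\,y^{-\alpha/(1-\alpha)}$, so the change-of-variables formula applied to $e^{-w}\,dw$ yields the conditional density
\[
f_{Y\mid U_2}(y\mid\varphi)=\frac{\alpha}{1-\alpha}\,a(\varphi)\,y^{-\alpha/(1-\alpha)-1}\,\exp\!\left(-a(\varphi)\,y^{-\alpha/(1-\alpha)}\right),\qquad y>0.
\]
The one algebraic simplification I need is $-\tfrac{\alpha}{1-\alpha}-1=-\tfrac1{1-\alpha}=-(1-\alpha)^{-1}$, which turns the power of $y$ into $(1/y)^{(1-\alpha)^{-1}}$. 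Integrating against the density $\tfrac1\pi$ of $U_2$ over $[0,\pi]$ then gives
\[
f_Y(y)=\frac1\pi\,\frac{\alpha}{1-\alpha}\left(\frac1y\right)^{(1-\alpha)^{-1}}\int_0^\pi a(\varphi)\,\exp\!\left(-a(\varphi)\left(\frac1y\right)^{\alpha/(1-\alpha)}\right)d\varphi,
\]
which is precisely the expression for $L_\alpha^{-\alpha}(y)$ in the preceding lemma. This establishes the corollary.

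The computation itself is routine; the main obstacle is bookkeeping rather than depth. The delicate points I would be careful about are: (i) getting the Jacobian right, which hinges on noticing that $w\mapsto y$ is monotone decreasing so that $|dw/dy|$ supplies the factor $\tfrac{\alpha}{1-\alpha}a(\varphi)y^{-\alpha/(1-\alpha)-1}$; (ii) the exponent identity $-\alpha/(1-\alpha)-1=-(1-\alpha)^{-1}$, without which the prefactor will not line up with the preceding lemma; and (iii) the apparent typo in the stated lemma, where the weight $a(\varphi)$ must appear inside the exponential for the representation to be a probability density and for the match to close. Positivity of $a$ on $(0,\pi)$ guarantees the transformation is well-defined and $Y>0$, so no support issues arise, and Tonelli's theorem justifies interchanging the $\varphi$-integration with the conditioning since the integrand is nonnegative.
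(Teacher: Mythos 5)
Your proof is correct and is exactly the derivation the paper leaves implicit by citing Kanter: condition on $U_2=\varphi$, push the exponential law of $-\log U_1$ through the monotone map $w\mapsto (a(\varphi)/w)^{(1-\alpha)/\alpha}$, and average over $\varphi$ to recover the integral representation of the preceding lemma. You are also right that the lemma as displayed contains a misprint --- the factor $a(\varphi)$ must appear inside the exponential, as in Kanter's original Lemma 4.1, both for the formula to define a probability density and for the corollary to follow.
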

\begin{corollary}
If $X\sim {L}_{\beta}^{-\beta}(\cdot)$, then $g_{\beta}(\cdot,t)$ is the probability density function of $Y=\dfrac{t^{\beta}}{X^{\beta}}$.
\begin{proof}
For $t>0$, the probability density function of $Y$ is given by
\begin{align}
\label{eqn:levgb}
\begin{split}
f_{Y}(y) &= {L}_{\beta}^{-\beta}\left(\dfrac{t}{y^{1/\beta}}\right)\left|\dfrac{dx}{dy}\right| \\
&= \dfrac{t}{\beta y}\dfrac{1}{y^{1/\beta}} {L}_{\beta}^{-\beta}\left(\dfrac{t}{y^{1/\beta}}\right) \\
&= \dfrac{t}{\beta y} {L}_{\beta}^{-\beta}(t,y) \\
&= g_{\beta}(y,t).
\end{split}
\end{align}
\end{proof}
\end{corollary}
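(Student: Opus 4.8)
The plan is to treat this as a routine transformation-of-variables computation for the strictly monotone map $x \mapsto t^{\beta}/x^{\beta}$, and then to collapse the resulting expression using two facts already recorded in the preliminaries: the scaling definition $L_{\beta}^{-\beta}(t;y) = \frac{1}{y^{1/\beta}} L_{\beta}^{-\beta}(t/y^{1/\beta})$, and the bridge identity $L_{\alpha}^{-\alpha}(t;x) = \frac{x\alpha}{t} g_{\alpha}(x;t)$ valid for $0<\alpha<1$ and $t,x>0$.

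First I would fix $t>0$ and observe that for $0<\beta<1$ with skewness $\theta=-\beta$ the density $L_{\beta}^{-\beta}(\cdot)$ is one-sided, i.e.\ supported on $(0,\infty)$; this is consistent with the bridge identity above, whose right-hand side is the density $g_{\beta}(\cdot;t)$ on the positive half-line. Consequently $Y = t^{\beta} X^{-\beta}$ arises from a smooth, strictly decreasing bijection of $(0,\infty)$ onto itself, so the change-of-variables formula $f_Y(y) = L_{\beta}^{-\beta}(x(y))\,\lvert dx/dy\rvert$ applies with no boundary terms to worry about.

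Next I would invert the relation: from $y = t^{\beta} x^{-\beta}$ one solves $x = t\,y^{-1/\beta}$, whence $\lvert dx/dy\rvert = \frac{t}{\beta}\,y^{-1/\beta-1} = \frac{t}{\beta y}\,y^{-1/\beta}$. Substituting yields $f_Y(y) = \frac{t}{\beta y}\,y^{-1/\beta}\,L_{\beta}^{-\beta}(t\,y^{-1/\beta})$. The decisive move is to recognize the grouping $y^{-1/\beta} L_{\beta}^{-\beta}(t\,y^{-1/\beta})$ as precisely the two-argument scaled density $L_{\beta}^{-\beta}(t;y)$ coming from its definition, so that $f_Y(y) = \frac{t}{\beta y}\,L_{\beta}^{-\beta}(t;y)$. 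Finally, applying the bridge identity with $\alpha=\beta$ and $x=y$ gives $L_{\beta}^{-\beta}(t;y) = \frac{y\beta}{t}\,g_{\beta}(y;t)$, and substituting this cancels the prefactors and leaves $f_Y(y) = g_{\beta}(y;t)$, which is the claim.

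The work here is essentially bookkeeping rather than analysis, so I do not expect a serious obstacle; the only points requiring genuine care are the notational overload between the one-argument density $L_{\beta}^{-\beta}(\cdot)$ and its scaled two-argument form $L_{\beta}^{-\beta}(\cdot\,;\cdot)$, and the verification that the support of $L_{\beta}^{-\beta}$ is $(0,\infty)$ with the map monotone there, which is what legitimizes the single-branch Jacobian computation. Once those are settled, the two stated identities do all of the remaining work.
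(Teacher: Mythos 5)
Your proposal is correct and follows essentially the same route as the paper: a single-branch change of variables with $x = t\,y^{-1/\beta}$, recognition of the two-argument scaled density $L_{\beta}^{-\beta}(t;y)$, and the identity $L_{\beta}^{-\beta}(t;y)=\frac{y\beta}{t}g_{\beta}(y;t)$ to cancel the Jacobian prefactor. The only addition is your explicit remark on the one-sided support of $L_{\beta}^{-\beta}$, which the paper leaves implicit.
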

\begin{corollary}
If $X\sim {L}_{\alpha}^{-\alpha}(\cdot)$, then ${L}_{\alpha}^{-\alpha}(\cdot,t)$ is the probability density function of $Y=Xt^{1/\alpha}$.
\begin{proof}
Similar proof as above.
\end{proof}
\end{corollary}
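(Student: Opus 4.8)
The plan is to apply the standard change-of-variables formula for probability densities, exactly as in the preceding corollary (equation \eqref{eqn:levgb}), but now to the strictly increasing linear map $Y = X t^{1/\alpha}$. Since $t>0$ and $\alpha>0$ force $t^{1/\alpha}>0$, this map is a bijection with inverse $x = y\,t^{-1/\alpha}$ and Jacobian $\lvert dx/dy\rvert = t^{-1/\alpha}$. For $0<\alpha<1$ with skewness $\theta=-\alpha$ the density $L_\alpha^{-\alpha}$ is one-sided (supported on $[0,\infty)$), and positive scaling preserves this support, so no sign subtleties arise.

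First I would substitute into $f_Y(y) = f_X\bigl(y\,t^{-1/\alpha}\bigr)\,\lvert dx/dy\rvert$, which gives
\begin{equation*}
f_Y(y) = L_\alpha^{-\alpha}\!\left(\frac{y}{t^{1/\alpha}}\right)\frac{1}{t^{1/\alpha}}.
\end{equation*}
Then I would recognize the right-hand side as the scaled density from the definition $L_\alpha^{\theta}(y;x)=x^{-1/\alpha}L_\alpha^{\theta}\bigl(y\,x^{-1/\alpha}\bigr)$, specialized to $\theta=-\alpha$ and $x=t$; this is precisely $L_\alpha^{-\alpha}(y;t)$, the claimed density. This mirrors the preceding corollary, the only simplification being that the present map is linear rather than the power map $t^{\beta}/X^{\beta}$ used there. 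There is no genuine obstacle beyond this bookkeeping: positivity of the scale factor makes the Jacobian's absolute value trivial to handle.

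As an independent cross-check I would verify the statement through characteristic functions using equation \eqref{eq:fw1}, which gives the Fourier transform of $L_\alpha^{-\alpha}(\cdot;t)$ as $e^{-\psi_\alpha^{-\alpha}(s)\,t}$, while $\mathbb{E}\bigl[e^{isY}\bigr]=\mathbb{E}\bigl[e^{is\,t^{1/\alpha}X}\bigr]=e^{-\psi_\alpha^{-\alpha}(s\,t^{1/\alpha})}$. The homogeneity $\psi_\alpha^{\theta}(t^{1/\alpha}s)=t\,\psi_\alpha^{\theta}(s)$—which holds because $t^{1/\alpha}>0$ leaves $\mathrm{sign}(s)$ unchanged and contributes the factor $(t^{1/\alpha})^{\alpha}=t$—makes the two transforms coincide, and uniqueness of the Fourier transform then identifies the density of $Y$ with $L_\alpha^{-\alpha}(\cdot;t)$.
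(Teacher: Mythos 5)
Your proposal is correct and is exactly the argument the paper intends by ``Similar proof as above'': the change-of-variables formula applied to the linear map $Y=Xt^{1/\alpha}$, with the result identified as the scaled density $L_{\alpha}^{-\alpha}(y;t)=t^{-1/\alpha}L_{\alpha}^{-\alpha}(y\,t^{-1/\alpha})$ from the paper's definition. The Fourier-transform cross-check is a valid bonus but not part of the paper's (implicit) proof.
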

${L}_{\alpha}^{-\alpha}(\cdot,t)$ can be seen as the density of a $\alpha$ stable subordinator, here the skewness parameter equals -$\alpha$. By using the above corollaries, a powerful computational approach is performed using $g_{\beta}$ instead of simulating the Wright function. The codes are done on Python using numba and matplotlib libraries. 
\begin{example}[Fractional growth/decay models]
The solution of the Fractional differential equation
\begin{equation}\label{eq_mit}
\begin{aligned}
\left\{
\begin{split}
D_{C}^{\beta} y(t) &=  -y(t) \\
y(0) &= 1
\end{split}\qquad
\right.
\end{aligned}
\end{equation}
has a solution given by $y(t) = \int_{0}^{\infty}  e^{ -x} g_{\beta}(x;t)dx =  E_{\beta}(- t^{\beta})$.
\begin{figure}[htp]
\centering
\includegraphics[scale=0.35]{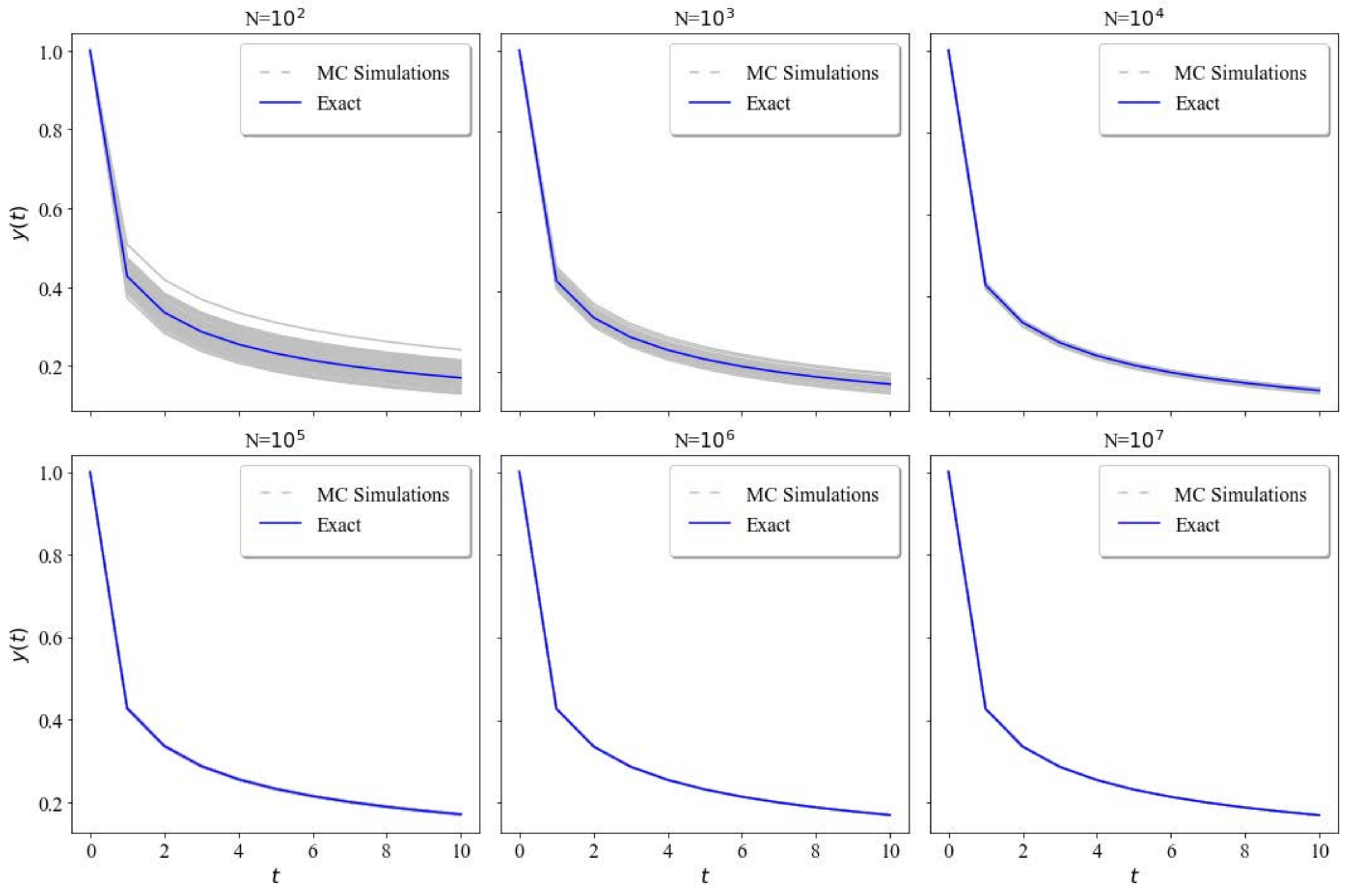}
\caption{The Monte Carlo integration repeated 100 times of simulations of the solution of equation \eqref{eq_mit} for each $\beta=0.5$ compared to the exact solution.}
\end{figure}

As a conclusion, the numerical solution converges to the actual solution when the random number values of $g_{\beta}$  are considerably high. Convergence is, however, good starting from $10^{4}$ randomly generated values from $g_{\beta}$.
\end{example}
\begin{example}
Let's consider the standard fractional Fokker-Planck equation of the form%
\begin{eqnarray}
_{C}D_{t}^{\beta }p_{\beta }(x,t) &=&\frac{\partial ^{2}p_{\beta }}{\partial
x^{2}}+x\frac{\partial p_{\beta }}{\partial x}+p_{\beta } \\
p(x,0) &=&f(x).
\end{eqnarray}
 We obtained a solution in the form
\begin{equation}\label{eq-FP}
p_{\beta }(x,t)=\int_{0}^{\infty }\int_{0}^{\infty }
\frac{\exp \left[ -\frac{\left( x-e^{-s}y\right) ^{2}}{2\left(
1-e^{-2s}\right) }\right] }{\sqrt{2\pi \left( 1-e^{-2s}\right) }}g_{\beta}(s;t) ds f(y)dy.
\end{equation}
See Figure \ref{Fig2} for numerical simulation of the solution and see Figure \ref{Fig3} for the mean absolute error for $N=10^6$ with respect to $N=10^7$.
\begin{figure}[htp]
\centering
\begin{tabular}{ccc}
\includegraphics[width=5cm]{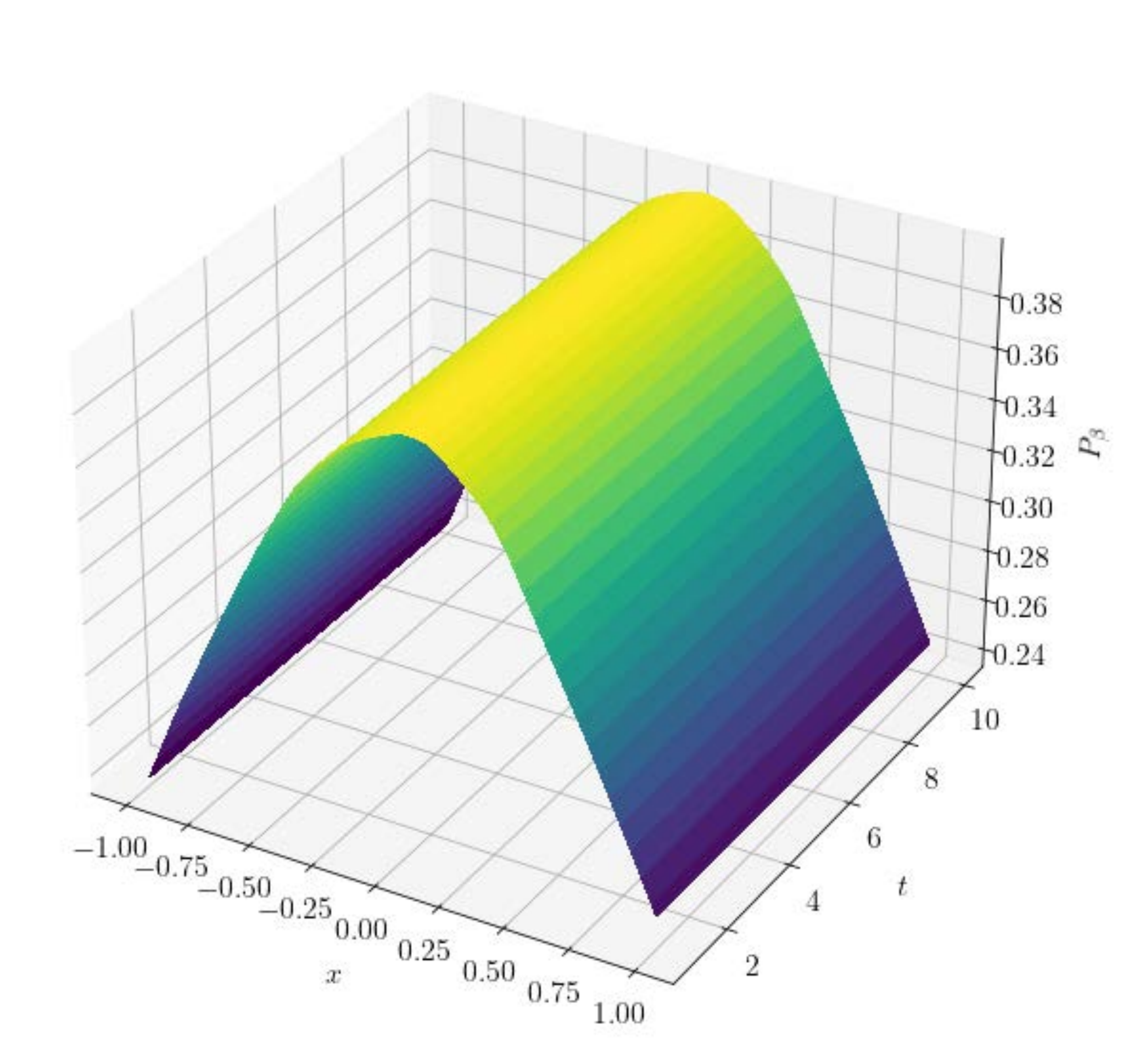} & \includegraphics[width=5cm]{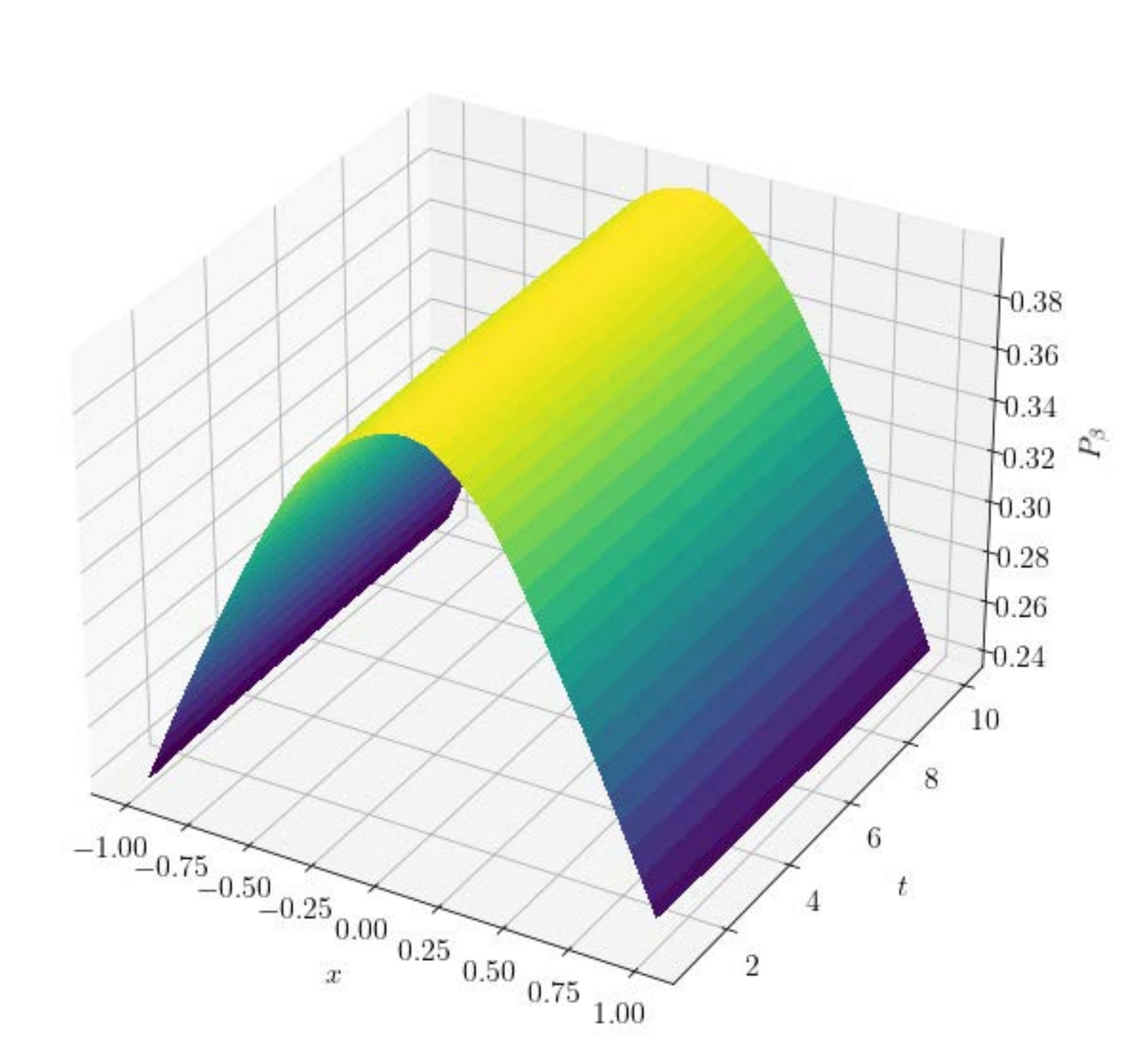} & \includegraphics[width=5cm]{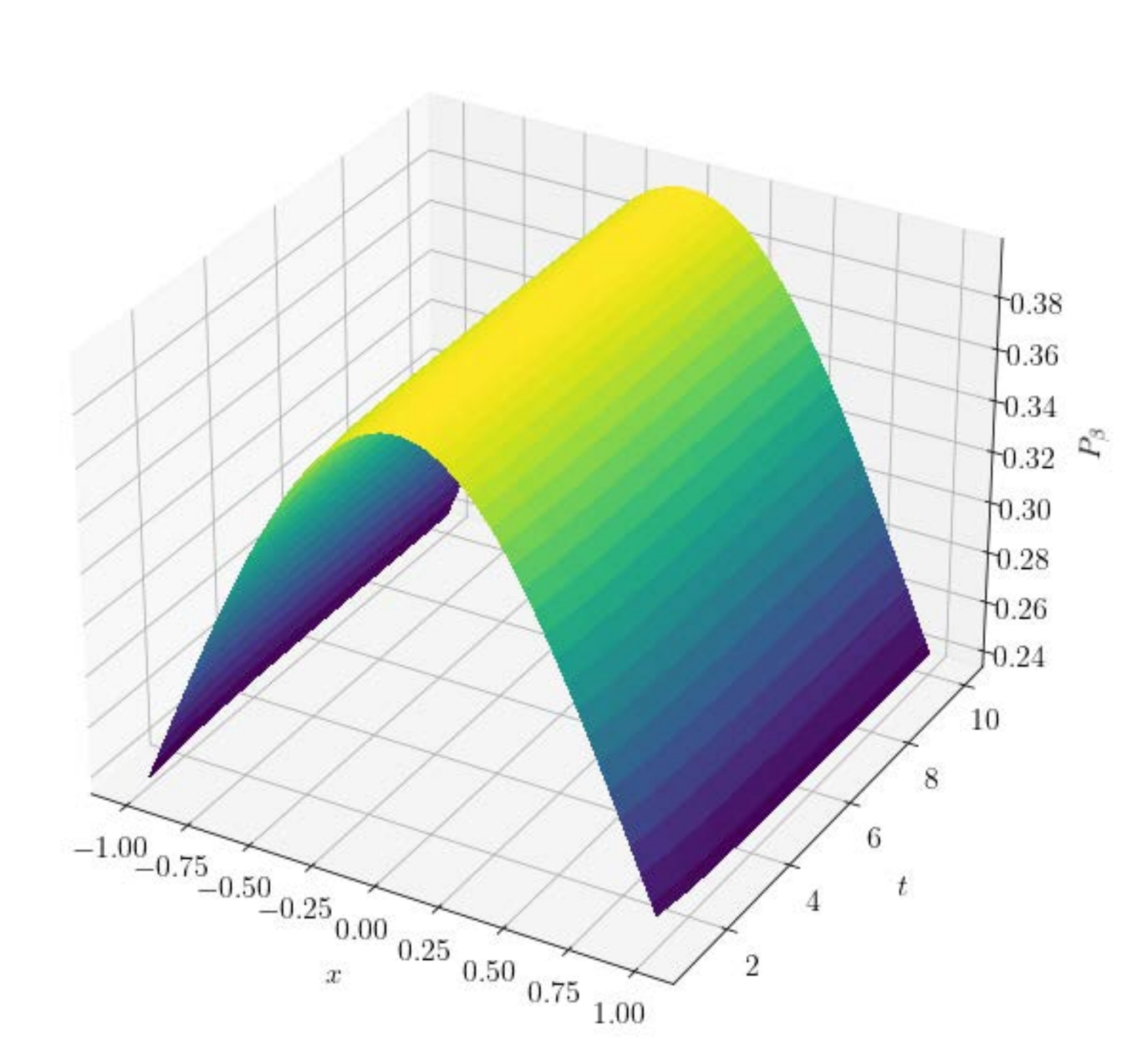} \\
(a)  & (b) & (c)
\end{tabular}
\caption{Fokker Plank Monte Carlo integration simulation of solution in equation \eqref{eq-FP} using a number of $10^7$ random values for $\beta=0.1$, $0.5$ and $0.9$, respectively.}\label{Fig2}
\end{figure}
\begin{figure}[htp]
\centering
\begin{tabular}{ccc}
\includegraphics[width=5cm]{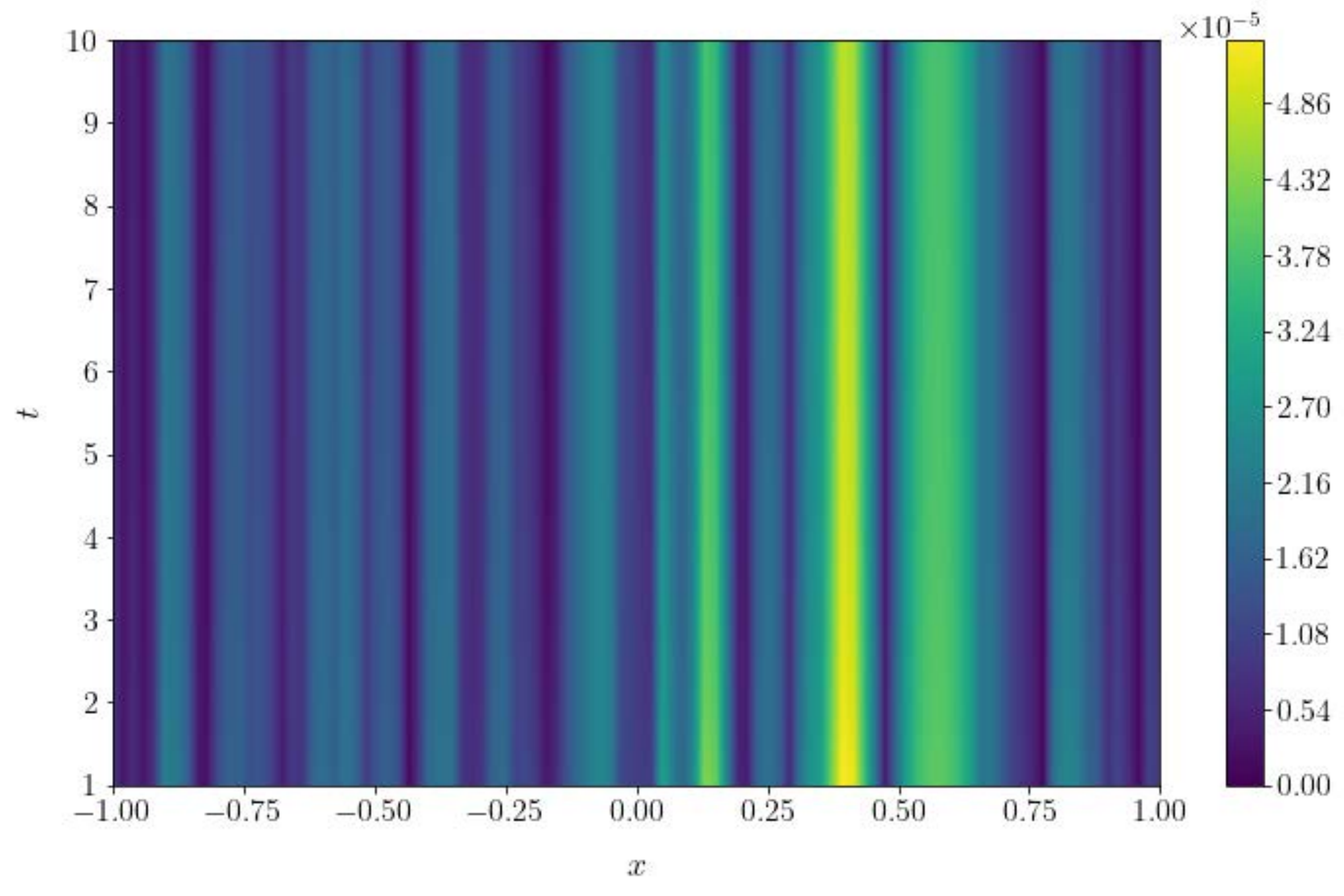} & \includegraphics[width=5cm]{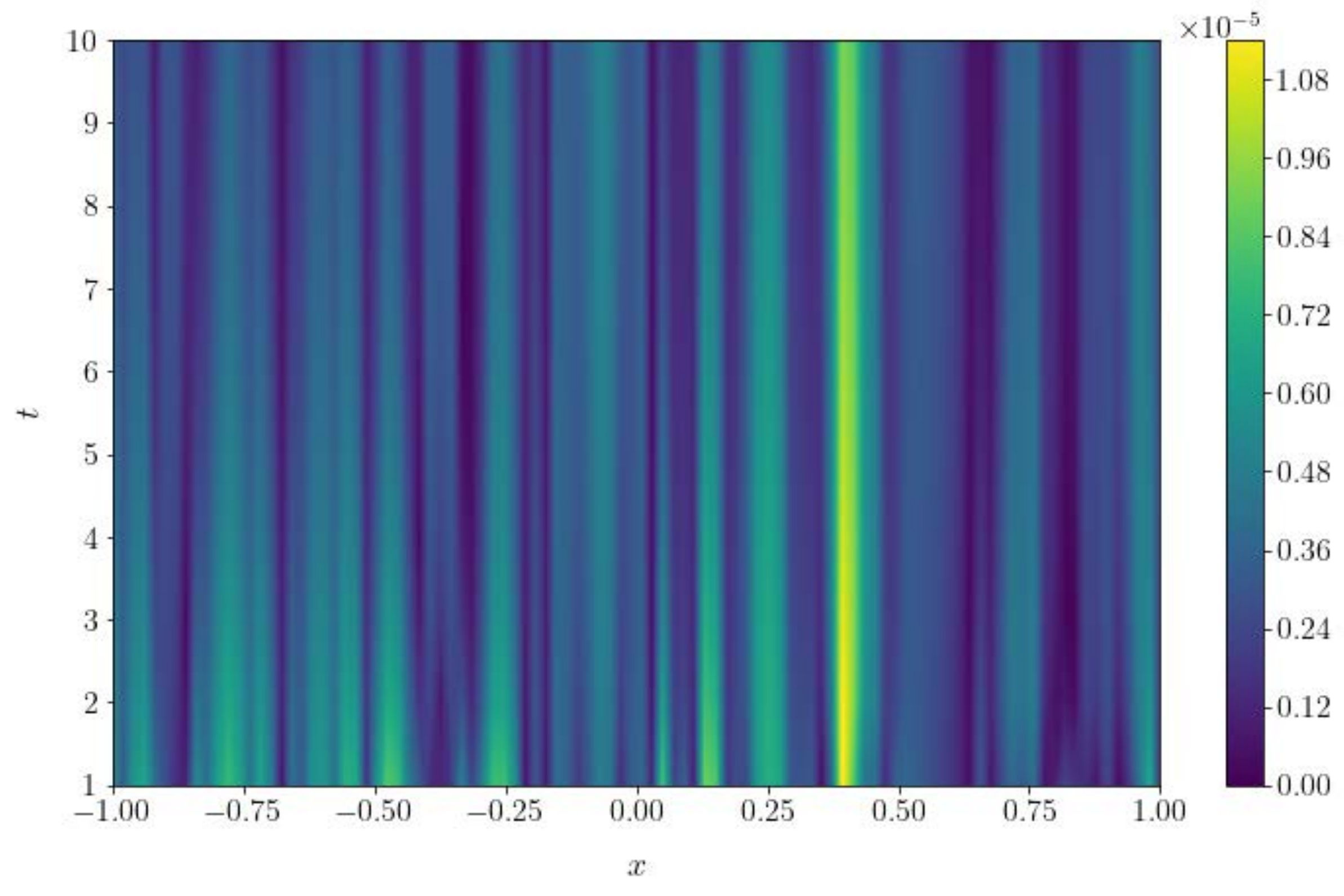} & \includegraphics[width=5cm]{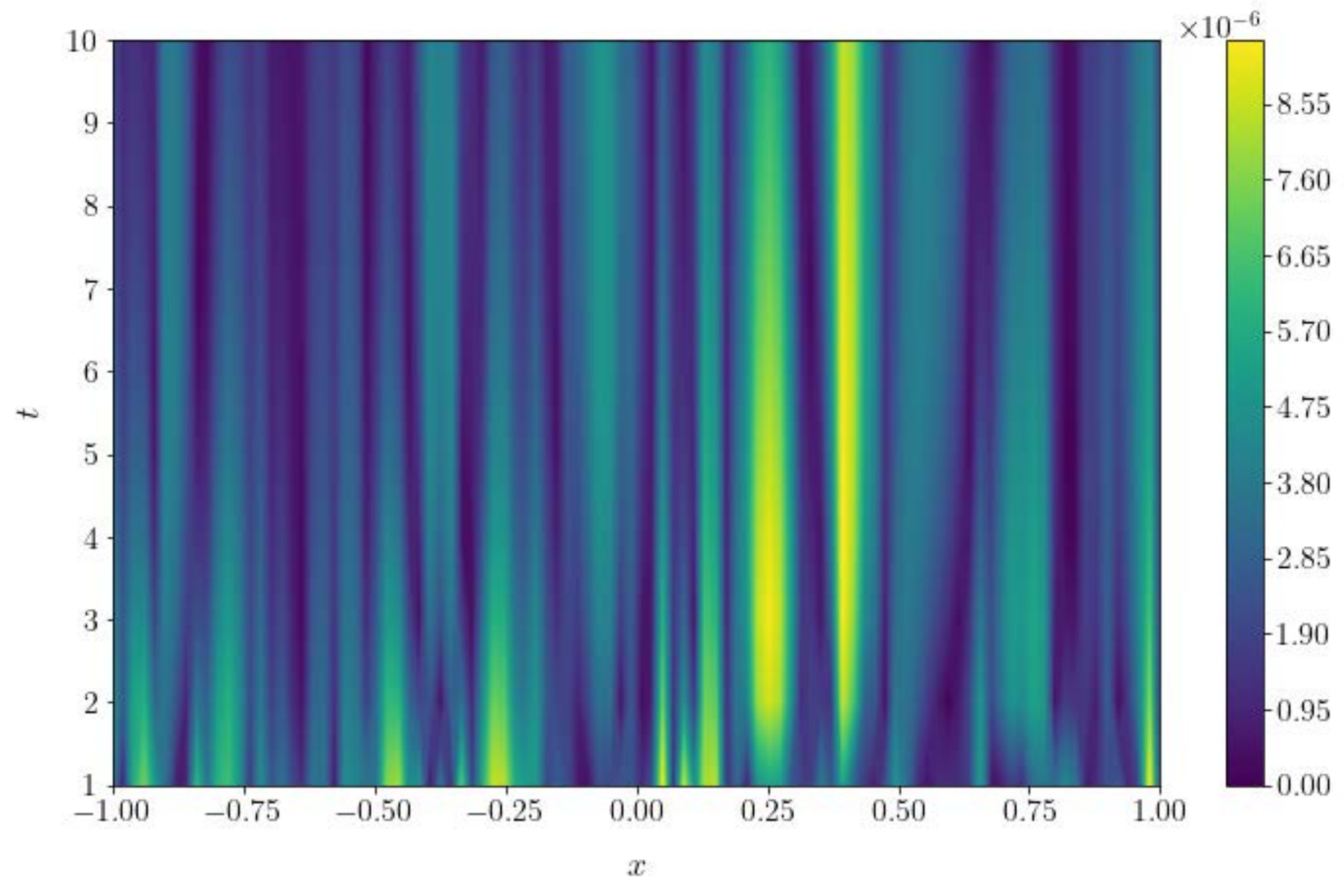} \\ 
(a)  & (b) & (c)  
\end{tabular}
\caption{Mean error in Fokker Plank Monte Carlo integration simulation of solution in equation \eqref{eq-FP} over time $t\in[1,10]$. The mean absolute error between using a number of $10^7$ and $10^6$ random values from $g_{\beta}$ for $\beta=0.1$, $0.5$ and $0.9$ respectively.}\label{Fig3}
\end{figure}
\end{example}
\begin{example}
The solution of the fractional heat/diffusion equation
\begin{equation}\label{eq-heat}
    u_{\alpha,\beta}(x,t)=\int_{0}^\infty \int_{0}^\infty L_2^0(x,\tau)L_{\alpha/2}^{-\alpha/2}(\tau,s)g_\beta(s,t) d\tau ds
\end{equation}
could be numerically computed for each $x$ and $t$ using a large number of simulations from $g_\beta(s,t)$, and for each $s$ simulate a large number of $\tau$ are simulated from $L_2^0(x,\tau)$ $L_{\alpha/2}^{-\alpha/2}(\tau,s)$. Then the values of $L_2^0(x,\tau)$ are averaged up over all values of $\tau$ and then over all values of $s$. See Figure \ref{Fig4}. 

A similar procedure could be done to simulate random numbers from $u_{\alpha,\beta}(x,t)$, but after simulating one $s$ from $g_\beta(s,t)$, and then one $\tau$ from $L_{\alpha/2}^{-\alpha/2}(\tau,s)$, then we simulate one $x$ from $L_2^0(x,\tau)$. 
\begin{figure}[htp]
\centering
\begin{tabular}{ccc}
\includegraphics[width=5cm]{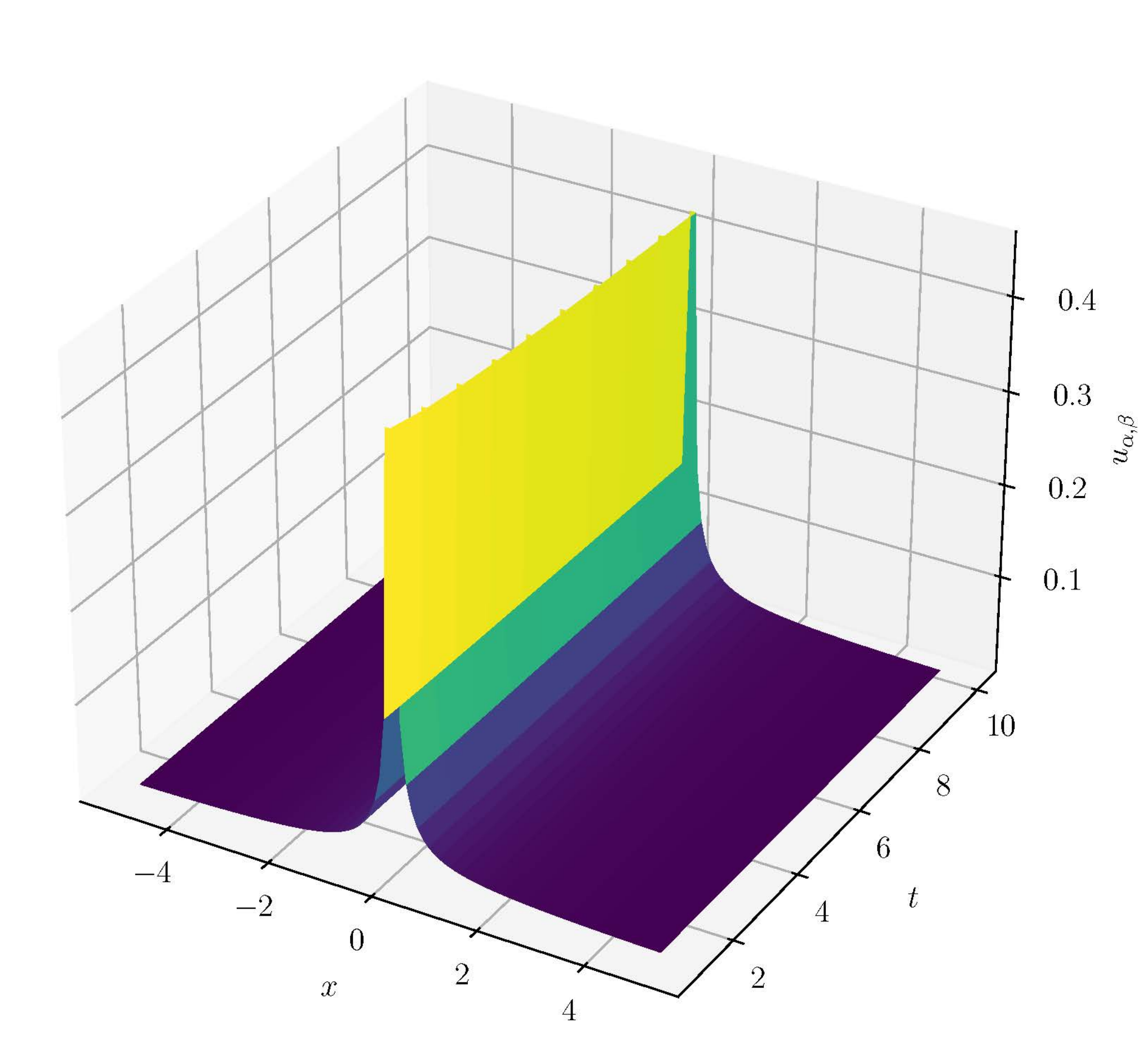} & \includegraphics[width=5cm]{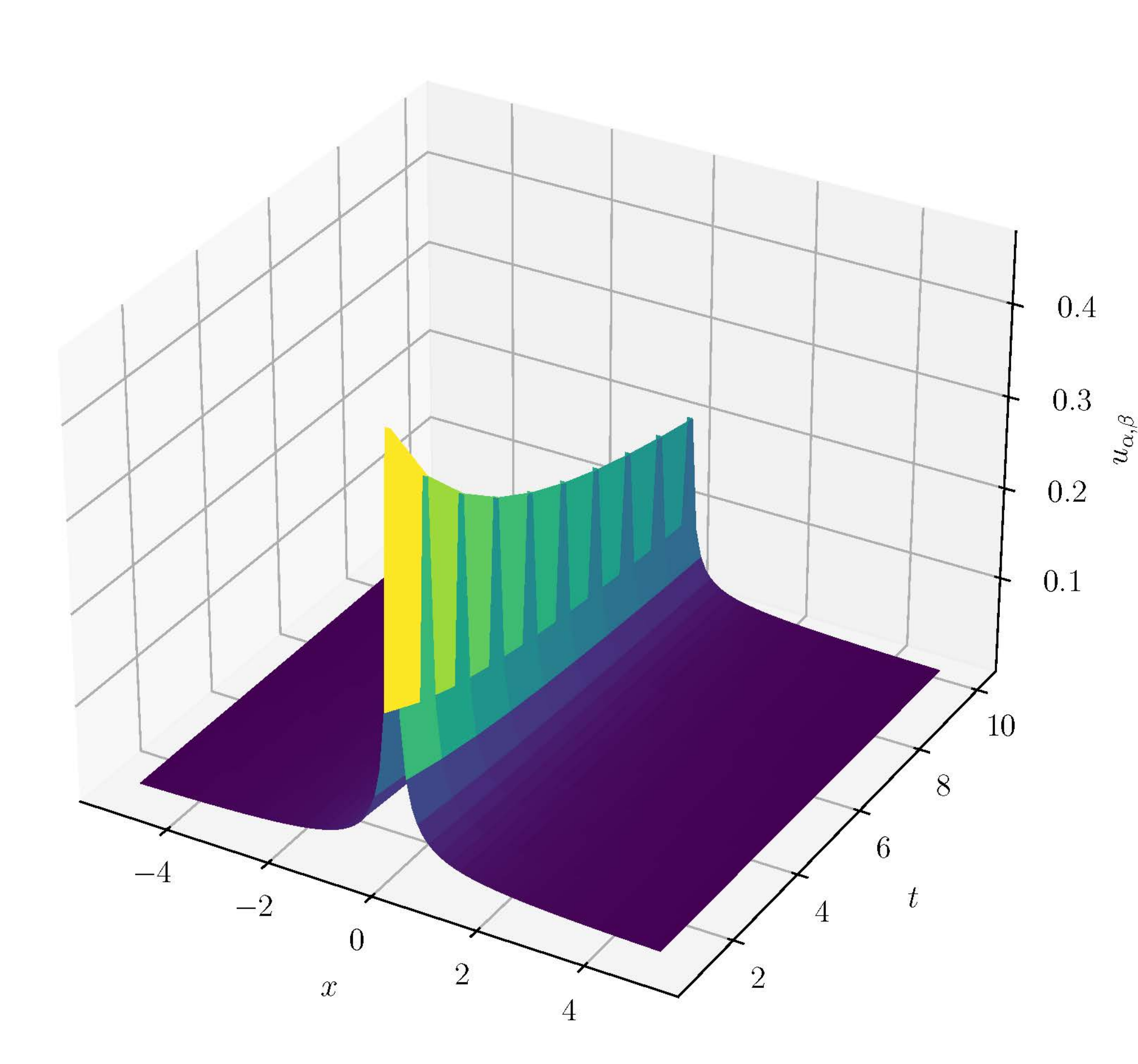} & \includegraphics[width=5cm]{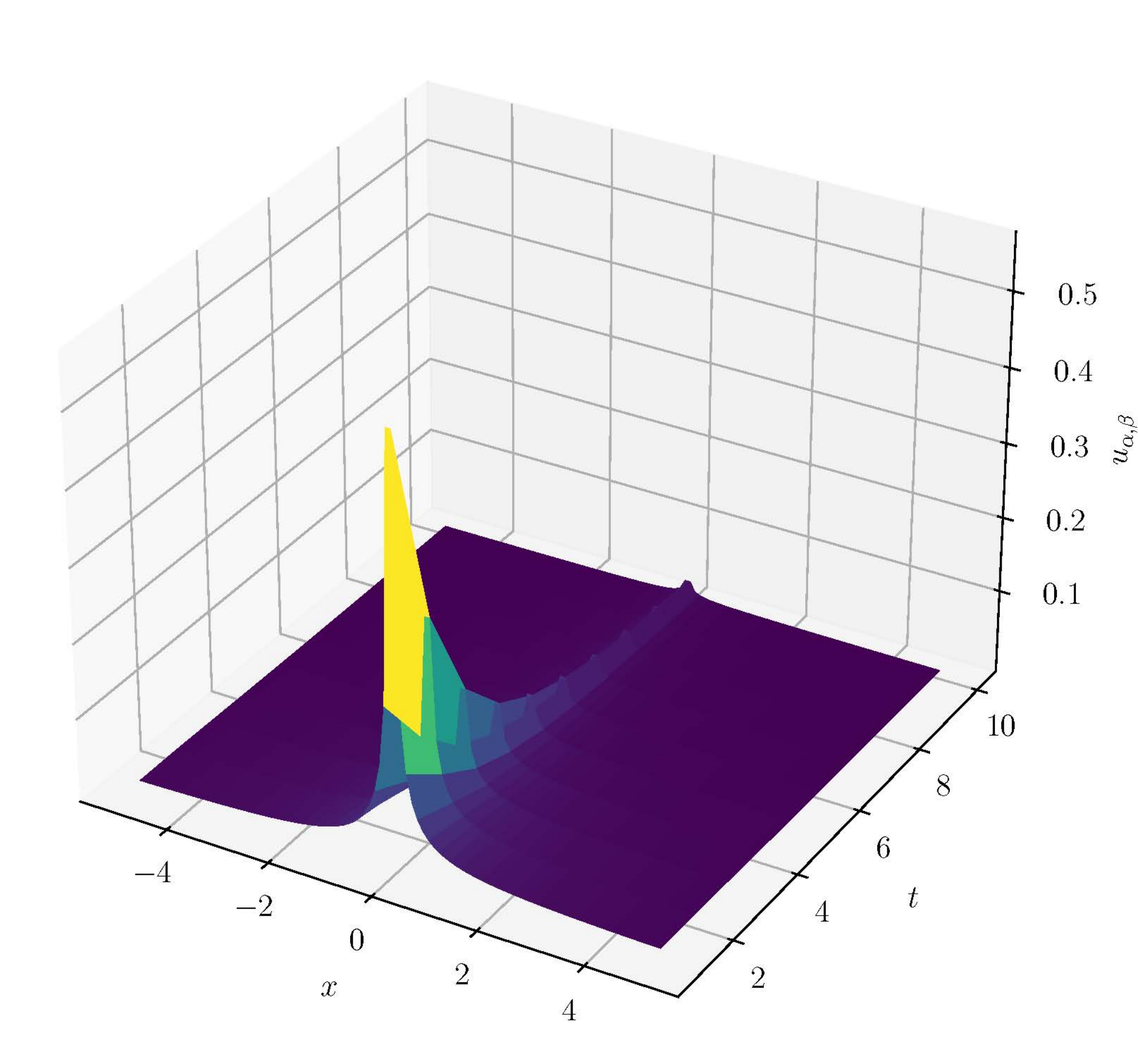} \\
(a) & (b) & (c) \\[6pt]
\includegraphics[width=5cm]{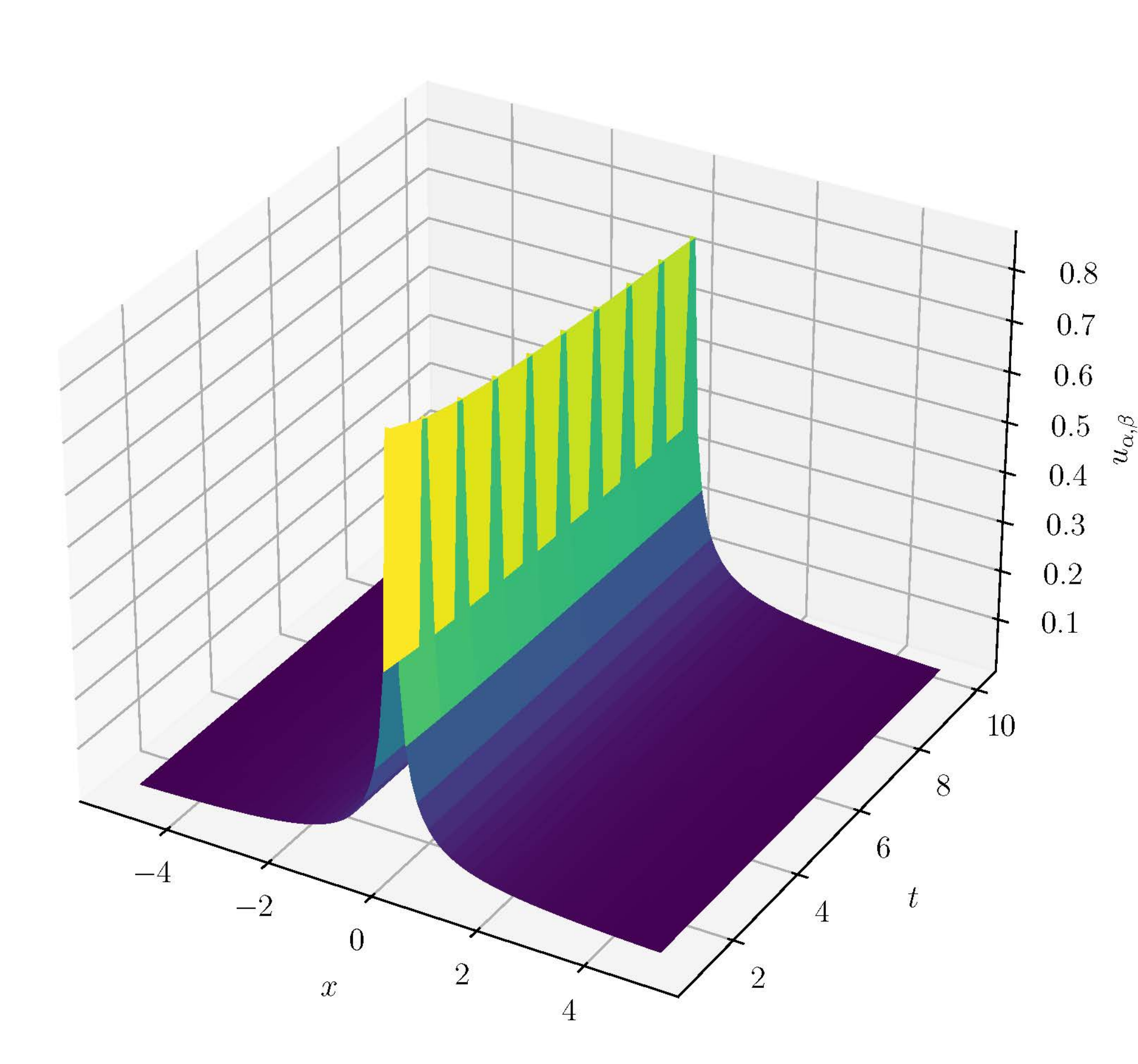} & \includegraphics[width=5cm]{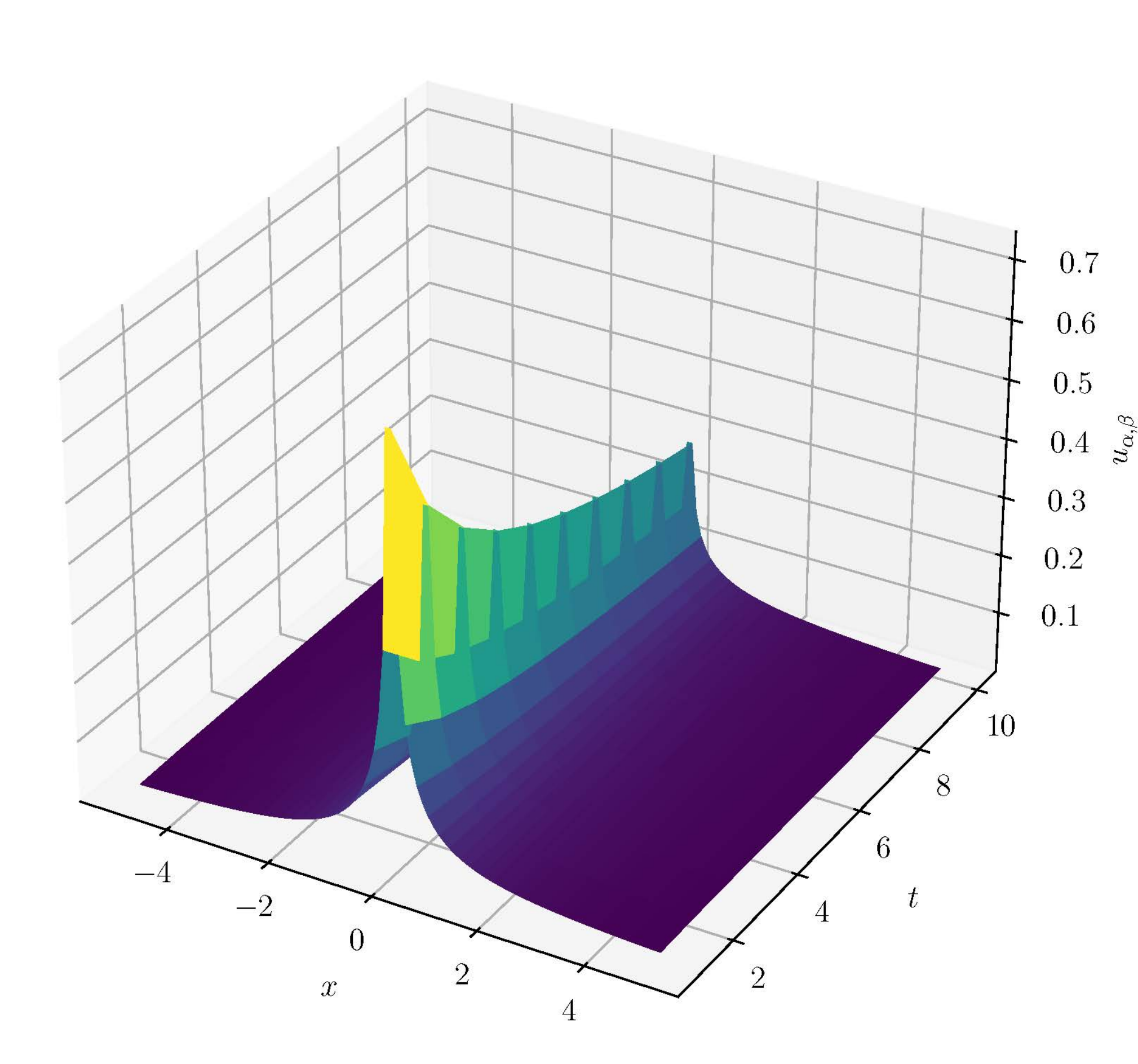} & \includegraphics[width=5cm]{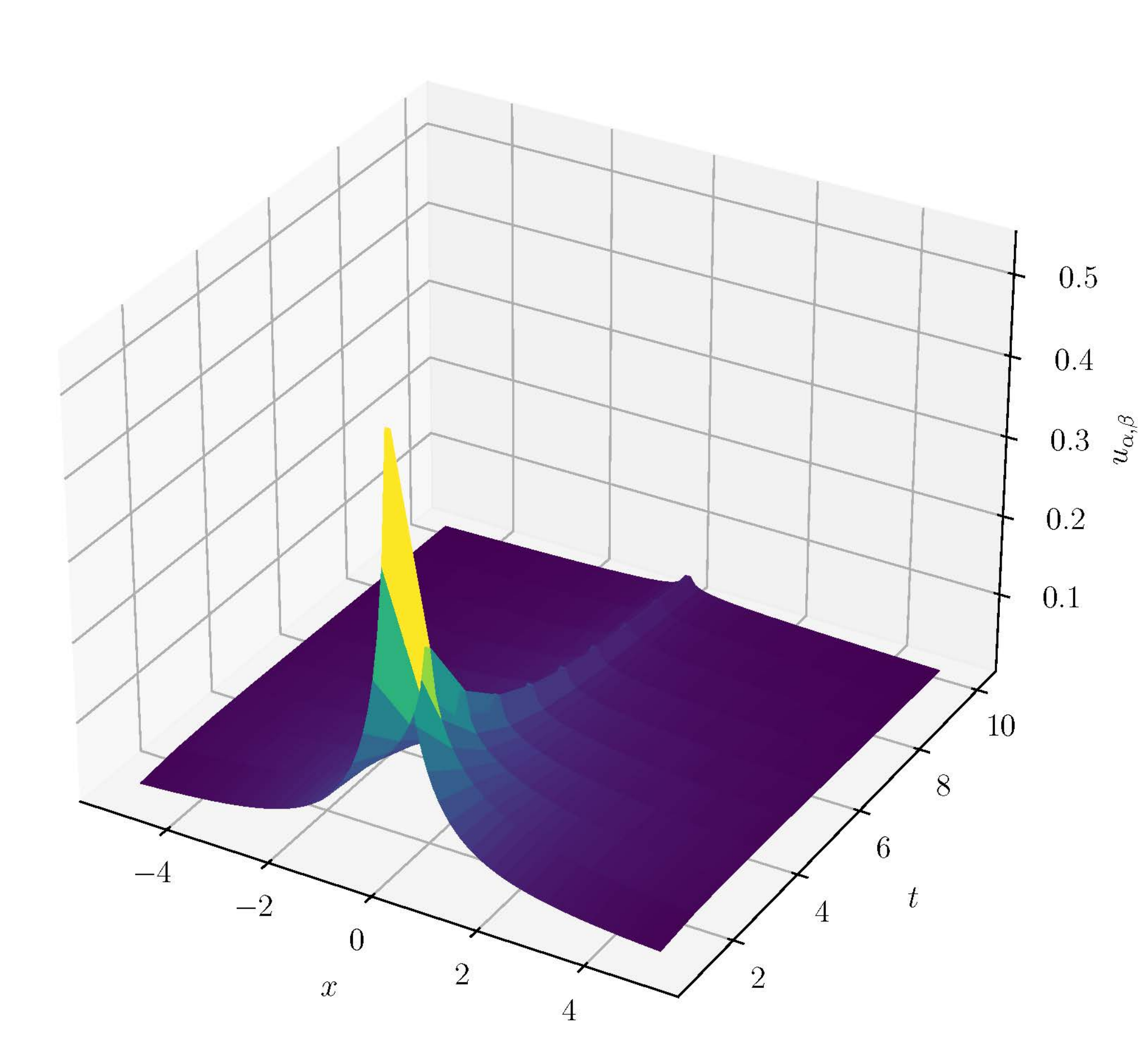} \\ 
(d)  & (e) & (f) \\[6pt]
\includegraphics[width=5cm]{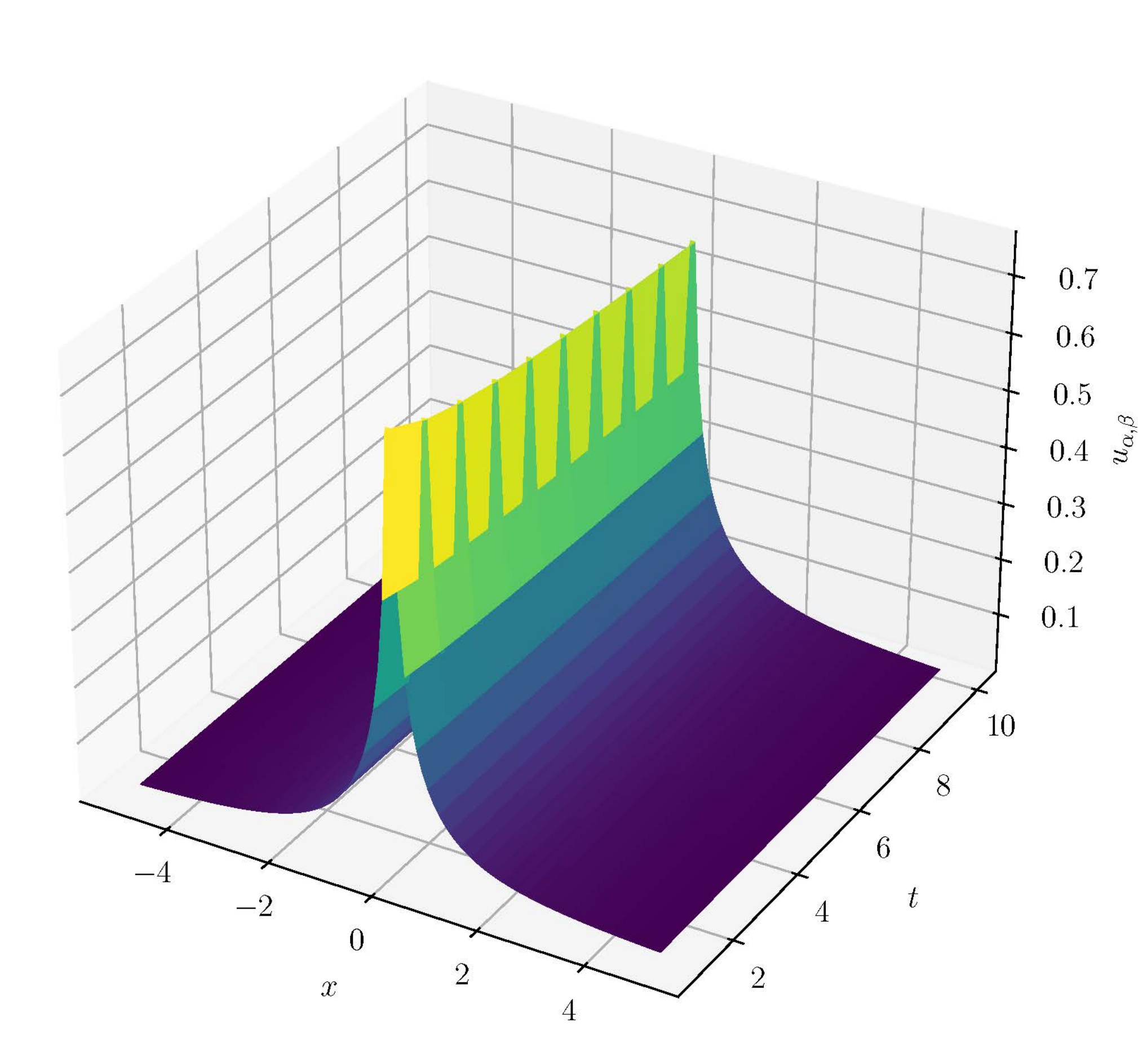} & \includegraphics[width=5cm]{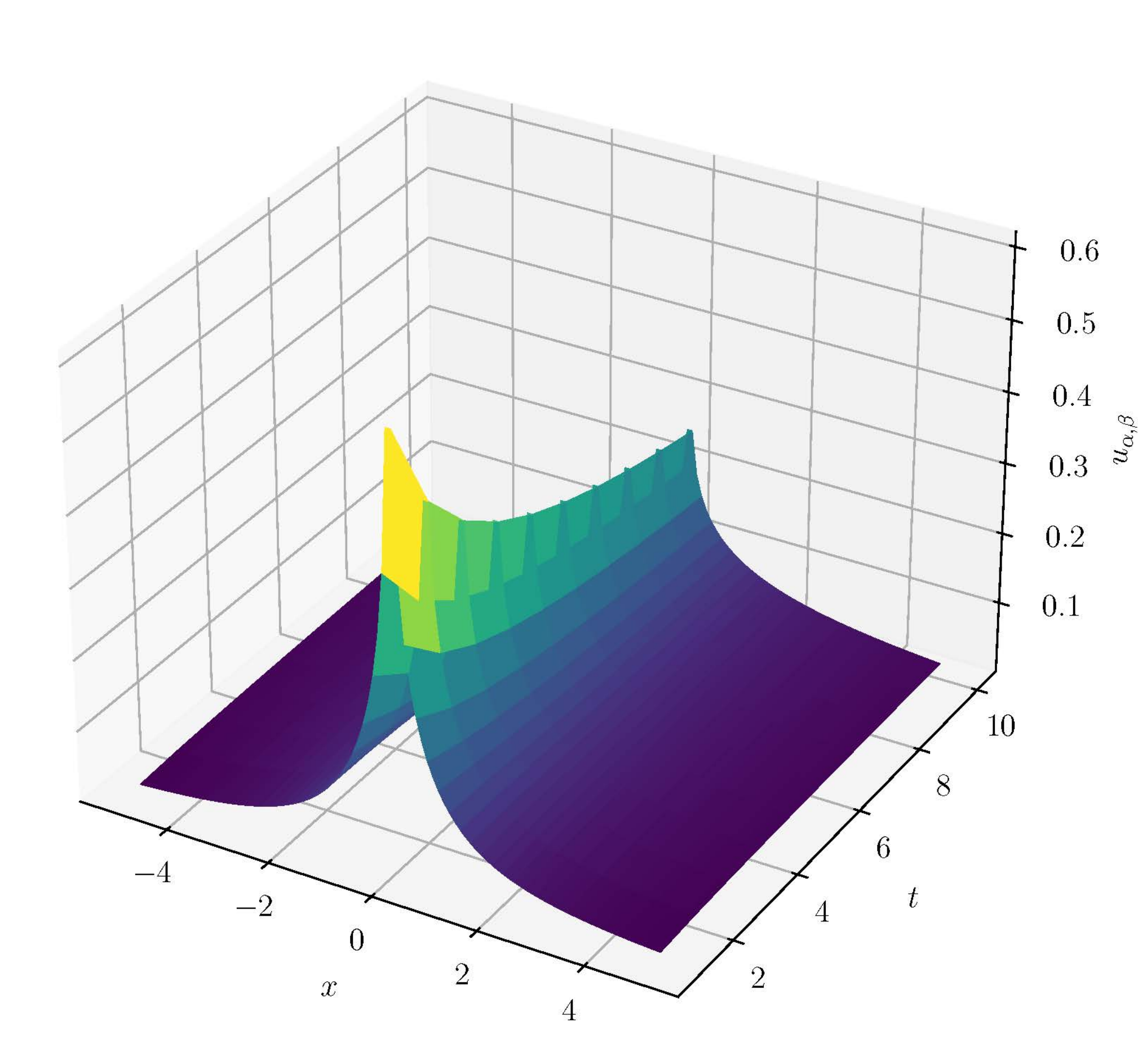} & \includegraphics[width=5cm]{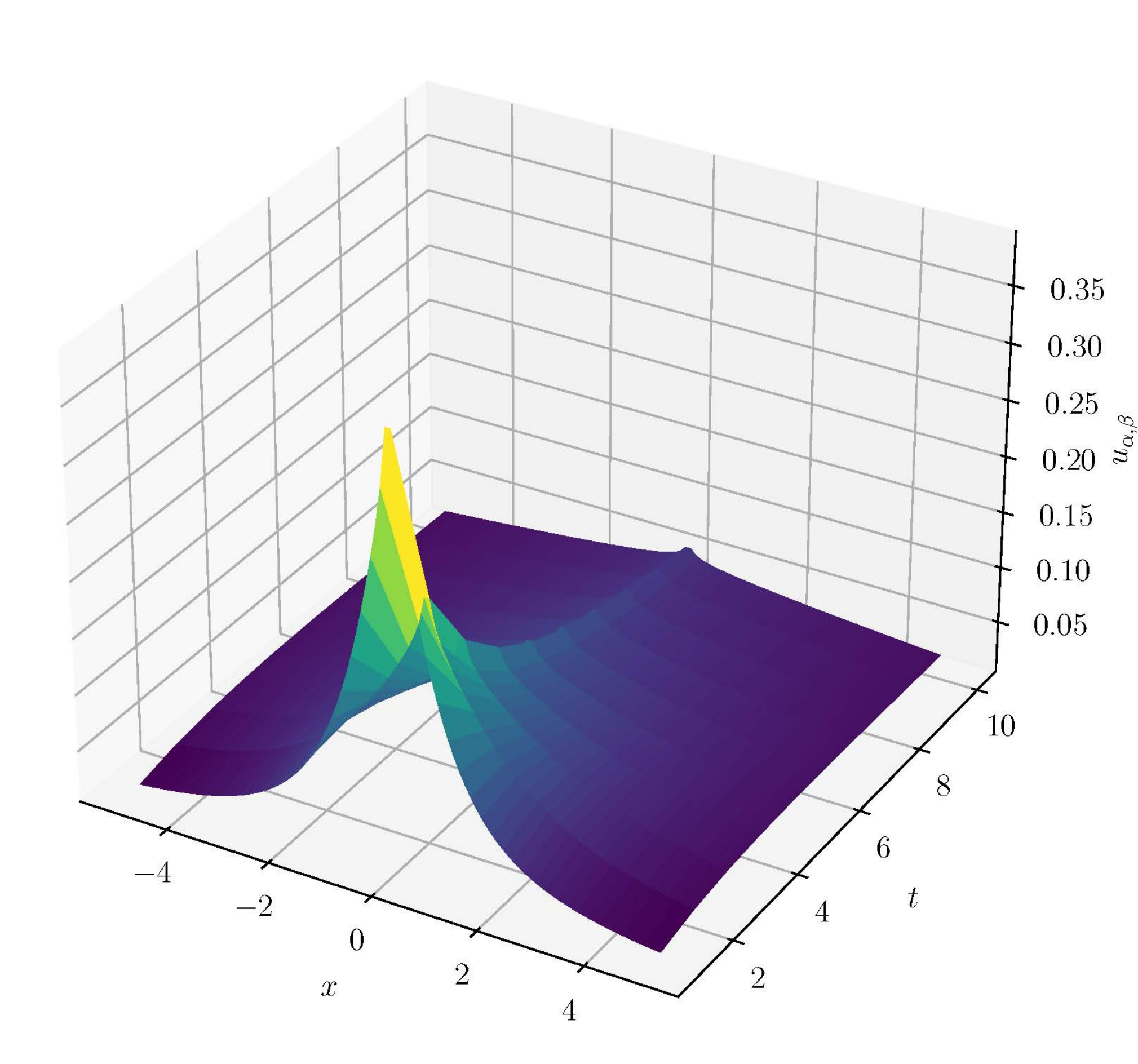} \\
(g)  & (h) & (i) \\
\end{tabular}
\caption{Simulation of the solution of fractional heat equation $u_{\alpha,\beta}$ of solution in equation \eqref{eq-heat} for $\beta=0.1$ in (a), (d) and (g), $\beta=0.5$ in (b), (e), and (h) and $\beta=0.9$ in (c), (f), and (i) and $\alpha=0.2$ in (a), (b) and (c), $0.6$ in (d), (e) and (f) and $1$ in (g), (h) and (i) for $(x,t)\in[-5,5]\times [1,10]$. The Monte Carlo integration was performed using a number of $10^7$ random values from $g_{\beta}$.}\label{Fig4}
\end{figure}
\end{example}
Linear ordinary differential equations and partial differential equations could be also solved numerically. Solutions then get interpolated at the values generated by $g_\beta(\cdot,t)$ and averaged at each value of $t$. 

\begin{example}
The solution of the fractional differential equation
\begin{equation}\label{last}
    D_C^{\beta+1} y(t)=- y(t)
\end{equation}
with $y(0)=0$ and $y'(0)=1$, could be numerically computed for each $t$ by numerically solving 
\begin{equation}\label{last2}
    D^2 z(x)=- z(x), 
\end{equation}
with $z(0)=0$, and $z'(0)=1$ at $N$ randomly generated $s$ values from $g_{\beta}(s,t)$ and then average the values up for each $t$. See Figure \ref{Fig5}. We used the Runge-Kutta method of hybrid order 4 and 5 from minimum generated $s$ to maximum generated $s$ and then interpolated the solution at the rest of the  randomly generated $s$ values. The solution is also given by
is given by 
\begin{equation*}
\sin_{\beta}\left( t\right)=\int_0^\infty \sin( x)g_{\beta}(x;t)dx=   t^{\beta}E_{2\beta,\beta+1}\left(-t^{2\beta}\right).
\end{equation*}

\begin{figure}[htp]
\centering

 \includegraphics[width=10cm]{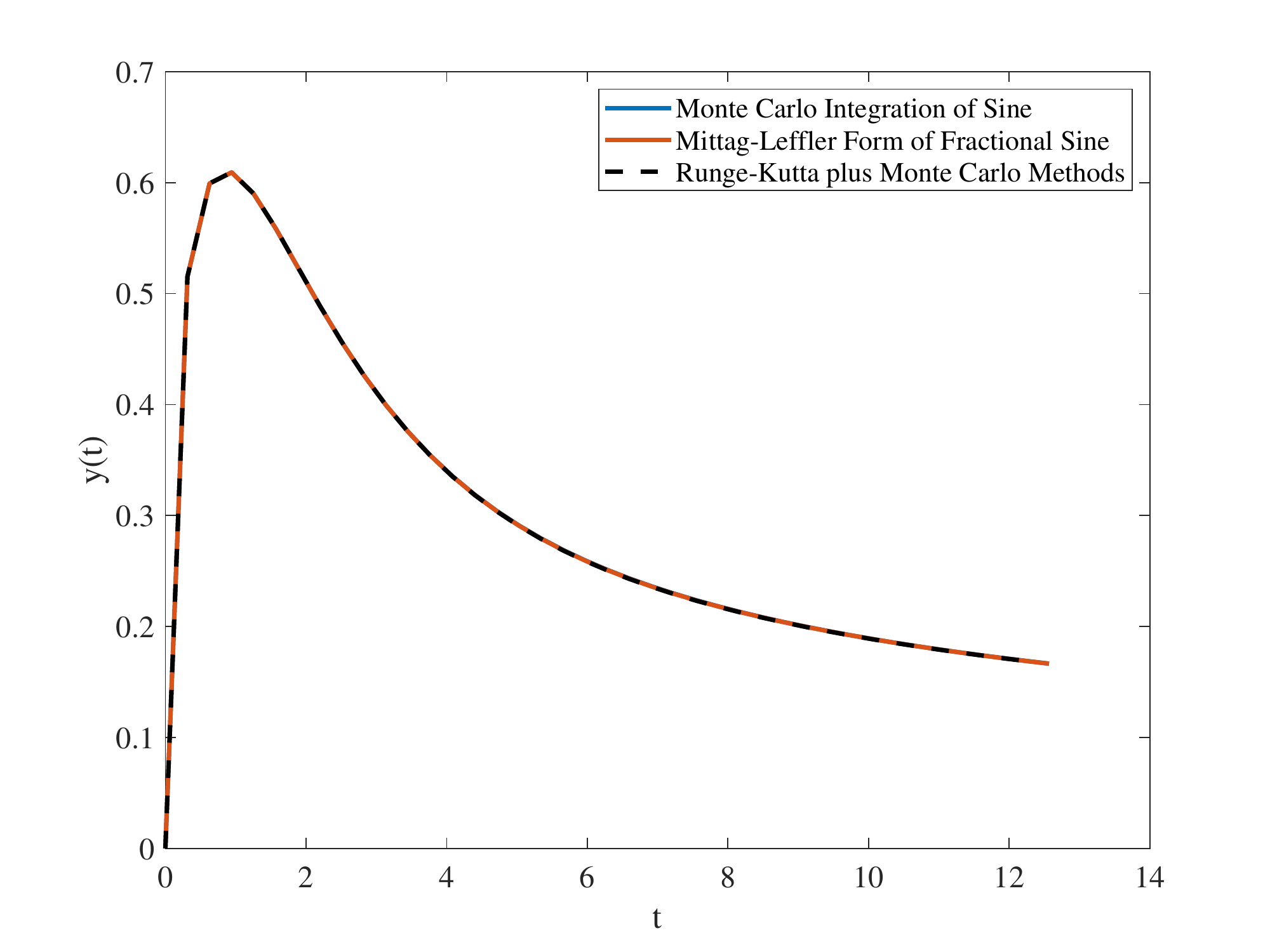} 
\caption{Monte Carlo simulation of the solution of fractional differential equation \eqref{last}, for $\beta=0.5$. The Monte Carlo integration was performed using a number of $N=10^7$ random values from $g_{\beta}$. The figures also show direct Monte Carlo integration of the sine function and the Mittag-Leffler function definition of the fractional sine function.}\label{Fig5}
\end{figure}
\end{example}

\section{Conclusion}
In this work, we have shown an alternative way to find the fundamental solutions for fractional partial differential equations (PDEs). Indeed, Riccati equation have allow us to study families of fractional diffusion PDEs with variable coefficients which allow explicit solutions. Those solutions connect Lie symmetries to fractional PDEs. We expect similar results for fracational dispersive equations. These results will appear in another work.

In our approach, we have taken advantage of Lie symmetries applied to fractional diffusion PDEs with variable coefficients. We predict that Feynman path integrals can play a similar role with fractional dispersive equations.

We conjecture that a general solution similar to that in equation \eqref{new} can be shown to hold for a larger class of fractional partial differential equations. A conjecture for which an interesting consequence that the L\'evy flight could be a result of the wide expanse allowed for the normal diffusion to make a jump over. 

Monte-Carlo integration of solutions of ordinary differential equations and partial differential equations with respect to heavy-tailed distributions is a new approach that can prove valuable for other general fractional equations. Evaluating such solutions take small amount of time thanks to the availability of fast computing devices. A general formula like equation \eqref{new} can make numerical solutions easier.





\end{document}